\def\Q{{\mathbb Q}}
\def\K{{\mathbb K}}
\def\OO{{\mathcal O}}
\def\LL{\mathcal{L}}
\def\FF{\mathcal{F}}
\def\GG{\mathcal{G}}
\def\II{\mathcal{I}}
\def\LL{\mathcal{L}}
\def\PP{\mathcal{P}}
\def\MM{\mathcal M}
\def\Ad{\widehat{A}}
\def\l{{\underline{\textit{l}}\,}}
\def\m{{\underline{\textit{m}}}}
\def\KA{\mathrm{K}(A)}
\def\KApr{\mathrm{K}(A^{\prime})}
\def\Pic{{Pic}}
\def\Pic0{\mathrm{Pic}^0}
\def\deg{\mathrm{deg}}
\def\Char{\mathrm{char}}
\def\la{{\langle}}
\def\ra{{\rangle}}
\theoremstyle{plain}
\newtheorem{theorem}{Theorem}[section]
\newtheorem{proposition/example}[theorem]{Proposition/Example}
\newtheorem{definition/theorem}[theorem]{Definition/Theorem}
\newtheorem{proposition}[theorem]{Proposition}
\newtheorem{corollary}[theorem]{Corollary}
\newtheorem{lemma}[theorem]{Lemma}
\newtheorem{claim}[theorem]{Claim}
\theoremstyle{definition}
\newtheorem{definition}[theorem]{Definition}
\newtheorem{remark}[theorem]{Remark}
\newtheorem{conjecture/question}[theorem]{Conjecture/Question}
\newtheorem{remark/definition}[theorem]{Remark/Definition}
\newtheorem{notation/assumptions}[theorem]{Assumptions/Notation}
\numberwithin{equation}{section}
\theoremstyle{remark}
\begin{document}  

\title{Syzygies of Kummer varieties}

 \author{Federico Caucci}
\address{Dipartimento di Matematica ``Federigo Enriques'', Universit\`a degli Studi di Milano,
Via Cesare Saldini 50, 20133 Milano -- Italy}
 \email{federico.caucci@unimi.it }
 \thanks{The author is supported by the ERC Consolidator Grant ERC-2017-CoG-771507-StabCondEn.
}

\maketitle

\setlength{\parskip}{.1 in}

\begin{abstract} 
We study syzygies of  Kummer varieties proving that their behavior is half of the   abelian varieties case. Namely, 
an $m$\emph{-th} power of an ample line bundle  on a Kummer variety
satisfies the Green-Lazarsfeld property $(N_p)$, if $m > \frac{p+2}{2}$.  
 \end{abstract}

\section{Introduction}

Let $A$ be an abelian variety defined over an algebraically closed field $\K$. Given an integer $p \geq 0$ and an ample line bundle $L$ on $A$, Lazarsfeld's conjecture  says that $L^m$ satisfies the property 
$(N_p)$, if
\begin{equation}\label{stimaLaz}
m > p+2.
\end{equation}
 We refer the reader to \S \ref{GLprop} for the definition of  property $(N_p)$. Here, we just content to mention  that $(N_0)$ means $L^m$ is projectively normal, and $(N_1)$ means that, in addition,  the homogeneous ideal of $A$ in $\mathbb{P}(H^0(A, L^m)^{\vee})$ is generated by quadrics.
Lazarsfeld's conjecture was proved by Pareschi \cite{pa1}  in characteristic zero and, more recently, by the author  in general as a consequence of the main result of \cite{ca}.
Note  that \eqref{stimaLaz} does \emph{not} depend on the dimension of $A$, and one has exactly the same behavior as in the case of elliptic curves which is classically due to Green \cite{green}.
 This strongly contrasts with some other situations like, e.g.,
syzygies of ``hyper-adjoint'' line bundles (see \cite{el} and the very recent improvement in \cite{bala}),  Mukai's conjecture on syzygies of adjoint line bundles (see \cite[pp.\ 118-119]{laI}) generalizing Fujita's conjecture, or 
the recent results of Jiang  \cite{zhinew}
giving
  partial answers   to
  Fulton's question on syzygies of rational homogeneous varieties \cite[Problem 4.5]{el}. 
In this paper, we deal with another class of varieties where a similar independent-from-dimension behavior for syzygies arises: Kummer varieties. 

Let us assume that $\Char(\K) \neq 2$ for the remainder of the introduction. 
The \emph{Kummer variety}
$\mathrm{K}(A)$   associated to an abelian variety $A$ over $\K$ is the quotient  of $A$ by the action of the inverse morphism $\iota \colon A \to A$.  The quotient morphism is denoted by $\pi \colon A \to \KA$. Given an ample line bundle $G$ on $\KA$, it is well-known by Mumford 
that the pullback $\pi^* G$ is even, i.e., $\pi^* G = L^2$ for an ample line bundle $L$ on $A$. Therefore, 
one naturally expects  (see \cite[\S 2.4]{tithesis}) that, when $m$ goes like half of right hand side of \eqref{stimaLaz}, the syzygies of $G^m$ are as simple as possible
up to the $p$\emph{-th} step. This is precisely what we prove here.

\begin{theorem}\label{main}
Let $\KA$ be the Kummer variety associated to an abelian variety $A$ defined over an algebraically closed field $\K$, with $\Char(\K) \neq 2$. Let   $G$ be an ample line bundle on $\KA$ and $m, p \geq 0$ be integers. If 
\[
m > \frac{p+2}{2},
\] 
then $G^m$ satisfies the property $(N_p)$.
\end{theorem}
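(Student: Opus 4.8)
The strategy is to transport the problem to the abelian variety $A$ along $\pi\colon A\to\KA$ and to feed in the sharp syzygy theorem for $A$ applied to $L^{2m}$. Since $\Char\K\neq2$, the $(\pm1)$-eigenspace decomposition under $\iota$ gives $\pi_*\OO_A=\OO_{\KA}\oplus\NN$; hence, for every coherent sheaf $\FF$ on $\KA$, the pullback $\pi^*\FF$ carries its canonical $\iota$-linearization and $H^i(\KA,\FF)$ is the $\iota$-invariant direct summand of $H^i(A,\pi^*\FF)$. In particular all the cohomology we need on $\KA$ can be computed on the \emph{smooth} variety $A$, so the singularities of $\KA$ play no role. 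By Mumford's theorem $\pi^*G=L^2$ with $L$ ample and symmetric, so $\pi^*G^m=L^{2m}$ and $H^0(\KA,G^{mk})=H^0(A,L^{2mk})^+$ for every $k\geq1$. Finally, the hypothesis $m>\tfrac{p+2}{2}\geq1$ forces $m\geq2$, and for $m\geq2$ the bundle $G^m$ is base-point free and very ample (the Kummer analogue of the classical fact that $L^{2}$ is base-point free and $L^{3}$ very ample on $A$), so that property $(N_p)$ is meaningful.

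By the standard cohomological criterion for $(N_p)$ in terms of the kernel bundle (see, e.g., \cite{el}), writing $0\to M\to H^0(\KA,G^m)\otimes\OO_{\KA}\to G^m\to0$ for the kernel bundle $M$ of $G^m$ on $\KA$, the line bundle $G^m$ satisfies $(N_p)$ provided
\[
H^1\!\bigl(\KA,\ \textstyle\bigwedge^i M\otimes G^{mq}\bigr)=0\qquad(1\le i\le p+1,\ q\ge1),
\]
the auxiliary higher cohomology vanishings required by the criterion being again checked on $A$. Pulling back, $\pi^*M=M_U$ is the kernel bundle on $A$ attached to the globally generating \emph{subspace} $U:=H^0(A,L^{2m})^+\subseteq H^0(A,L^{2m})$, and $\bigwedge^i M\otimes G^{mq}$ pulls back to $\bigwedge^i M_U\otimes L^{2mq}$. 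Thus it suffices to prove that the $\iota$-invariant part of $H^1\!\bigl(A,\bigwedge^i M_U\otimes L^{2mq}\bigr)$ vanishes for $1\le i\le p+1$ and $q\ge1$.

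The abelian-variety input now enters, applied to the \emph{full} linear system $|L^{2m}|$. Since $m>\tfrac{p+2}{2}$ we have $2m>p+2$, so $L^{2m}$, being a $(2m)$-th power of the ample bundle $L$, satisfies the bound \eqref{stimaLaz}; by Pareschi \cite{pa1} in characteristic zero and the author \cite{ca} in general, $L^{2m}$ satisfies $(N_p)$ on $A$, and, the proofs being based on $M$-regularity, one actually obtains the stronger vanishing $H^1\!\bigl(A,\bigwedge^i M_{L^{2m}}\otimes L^{2mq}\otimes\alpha\bigr)=0$ for all $1\le i\le p+1$, $q\ge1$ and all $\alpha\in\mathrm{Pic}^{0}(A)$. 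To pass from $|L^{2m}|$ to the invariant subsystem $U$, choose a flag $U=U_0\subsetneq U_1\subsetneq\cdots\subsetneq U_r=H^0(A,L^{2m})$ enlarging $U$ by one $\iota$-eigensection at a time; each $U_j$ is then $\iota$-stable, one has $\iota$-equivariant exact sequences $0\to M_{U_j}\to M_{U_{j+1}}\to\OO_A\to0$, and hence $0\to\bigwedge^i M_{U_j}\to\bigwedge^i M_{U_{j+1}}\to\bigwedge^{i-1}M_{U_j}\to0$. A double induction, descending in $j$ from the case $U_r=H^0(A,L^{2m})$ just recalled and ascending in $i$ from $\bigwedge^0 M_U=\OO_A$, reduces everything to the surjectivity, at every step of the flag, of the maps $H^0\!\bigl(A,\bigwedge^i M_{U_{j+1}}\otimes L^{2mq}\bigr)\to H^0\!\bigl(A,\bigwedge^{i-1}M_{U_j}\otimes L^{2mq}\bigr)$ obtained by evaluating the newly added section; for $i=1$ these are the multiplication maps $U_j\otimes H^0(A,L^{2mq})\to H^0(A,L^{2m(q+1)})$. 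One derives these surjectivities from the continuous global generation / $M$-regularity properties of $L^{2m}$ on $A$ furnished by \cite{pa1},\cite{ca}, which is why the $\mathrm{Pic}^{0}(A)$-twisted vanishing must be carried through the whole induction: it is the part of the statement robust under the one-dimensional extensions $U_j\subset U_{j+1}$.

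The essential difficulty is precisely this last step: transporting the sharp syzygy/vanishing statement from the complete linear system $|L^{2m}|$ down to the $\iota$-invariant subsystem that computes $|G^m|$ on $\KA$. Naively, the long exact sequences produced by the flag lose exactly the surjectivity of the relevant evaluation and multiplication maps, so one must keep the $M$-regular ($\mathrm{Pic}^{0}(A)$-twisted) strengthening at every stage and check that it is preserved both under the one-dimensional enlargements $U_j\subset U_{j+1}$ and under exterior powers; packaged correctly, this amounts to showing once and for all that the $M$-regularity criterion which yields $(N_p)$ on an abelian variety also yields $(N_p)$ on the associated Kummer variety, after which the numerical bound is automatic, since replacing $G$ by its pullback $L^2$ halves the controlling invariant. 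The remaining points, namely the auxiliary $H^{\geq2}$-vanishings in the cohomological criterion and the base-point freeness and very ampleness of $G^m$ for $m\geq2$, are routine and are settled, once more, by pulling back to the smooth variety $A$.
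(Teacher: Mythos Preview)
Your strategy correctly isolates the crux of the matter—passing from the full linear system $|L^{2m}|$ on $A$ to the $\iota$-invariant subsystem $U=H^0(A,L^{2m})^+$—but the flag induction you propose does not close. At each descending step $j+1\to j$ and each $i$ you need the surjectivity of
\[
H^0\bigl(\textstyle\bigwedge^i M_{U_{j+1}}\otimes L^{2mq}\otimes\alpha\bigr)\longrightarrow H^0\bigl(\textstyle\bigwedge^{i-1}M_{U_j}\otimes L^{2mq}\otimes\alpha\bigr)
\]
for every $\alpha$. Already for $i=1$ this unwinds to the surjectivity of the multiplication map $U_j\otimes H^0(L^{2mq}\otimes\alpha)\to H^0(L^{2m(q+1)}\otimes\alpha)$, and since every $U_j$ contains $U$, the last step of the descent demands that
\[
H^0(A,L^{2m})^+\otimes H^0(A,L^{2mq}\otimes\alpha)\longrightarrow H^0(A,L^{2m(q+1)}\otimes\alpha)
\]
be surjective for all $\alpha$. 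This statement about the \emph{invariant} subspace is not a consequence of the $M$-regularity or continuous global generation of $L^{2m}$, which only controls the full $H^0(A,L^{2m})$; your assertion that the $\Pic0$-twisted vanishing is ``robust under one-dimensional extensions'' is precisely this unproved step. For $i\ge2$ the required surjectivities involve $\bigwedge^{i-1}M_{U_j}$ for the \emph{smaller} space $U_j$, and no inductive hypothesis available at that stage supplies them. You have accurately named the essential difficulty in your final paragraph and then asserted that it can be overcome, without providing the mechanism.

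The paper circumvents the obstacle by a different route. Rather than compare $M_U$ with $M_{L^{2m}}$ through a flag, it passes along an isogeny $f\colon A\to A'$ (Lemma~\ref{sasaki1}) to a situation in which $K((\pi')^*G')=A'_2$, which forces $H^0(A',(\pi')^*G')_+=H^0(A',(\pi')^*G')$: on $A'$ the invariant subspace \emph{is} the whole space, so the subsystem problem disappears there. One then has $\beta(A',(\pi')^*H')=\tfrac{1}{2m}<\tfrac{1}{p+2}$, and the surjectivity of the relevant \emph{fractional} multiplication maps on $A'$ follows from \cite{jipa} together with results of Sekiguchi. Transporting this surjectivity back to $A$ (Claim~\ref{claim0}) uses the irreducibility of the Heisenberg representation of the theta group—an input of a quite different nature from any flag/filtration argument. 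A secondary issue: in positive characteristic the kernel-bundle criterion for $(N_p)$ is formulated with \emph{tensor} powers $M_H^{\otimes(p+1)}$ rather than exterior powers (see \S\ref{gencrit} and \cite{ca}), since $\bigwedge^{i}$ need not be a direct summand of the $i$-th tensor power when $\Char(\K)\le i$; your appeal to the wedge-power form of the criterion would also have to be repaired.
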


Sasaki \cite{sa1}  showed that $G^m$ is projectively normal, i.e., it has the property $(N_0)$, if $m > 1$.
It was later proved by Khaled \cite{khaled} that the homogeneous ideal of $\KA$ in $\mathbb P(H^0(\KA, G^m)^{\vee})$ is generated by quadrics if $m > 2$, and by quadrics and cubics if $m = 2$. This, in turn,  improved a previous result of Sasaki from \cite{sa2}.
 More recently, Tirabassi \cite[Theorem 1(i)]{ti} extended Sasaki's theorem to higher syzygies showing that $G^m$ satisfies the property $(N_p)$, if $m \geq p + 2$ (with some constraints on the characteristic of the base field). When $p \geq 1$, she also proved  that taking $m \geq p+1$ suffices, if $\pi^* G = L^2$ and  $L$ has no base divisors \cite[Theorem 2(i)]{ti}.

Note that Theorem \ref{main} already improves the results of Sasaki and Khaled. Indeed, by taking $p=1$,  one gets:
\begin{corollary}
If $m \geq 2$, then $G^m$ satisfies the property $(N_1)$, i.e.,  it is projectively normal \emph{and} the homogeneous ideal 
of $\KA$ in  $\mathbb{P}(H^0(\KA, G^m)^{\vee})$ is generated by quadrics.
\end{corollary}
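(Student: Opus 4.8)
The plan is to read off the corollary as the case $p=1$ of Theorem \ref{main}, which is available to us. First I would specialize the hypothesis $m > \frac{p+2}{2}$ to $p=1$, where it reads $m > \frac{3}{2}$; since $m$ is assumed to be an integer, this is exactly the condition $m \geq 2$ appearing in the corollary. Theorem \ref{main} then guarantees that $G^m$ satisfies property $(N_1)$, and all that remains is to rephrase this conclusion in the terms used in the statement.

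Next I would unwind the definition of $(N_1)$ recorded in \S\ref{GLprop}. By the usual Green--Lazarsfeld convention, property $(N_1)$ subsumes $(N_0)$: it asserts that $G^m$ is normally generated, i.e.\ projectively normal, and that moreover the homogeneous ideal of $\KA$ in $\mathbb{P}(H^0(\KA, G^m)^{\vee})$ is generated in degree two, that is, by quadrics. This is word for word the assertion of the corollary, so the deduction is complete once Theorem \ref{main} is applied.

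The point I want to stress is that there is no genuine obstacle located in the corollary itself: all of the substance sits inside Theorem \ref{main}, and the corollary is a purely formal specialization whose only arithmetic content is the passage from the strict inequality $m > 3/2$ to the integer bound $m \geq 2$. It is worth observing, however, that this bound is precisely the ``half'' behaviour anticipated in the introduction. Under the even pullback $\pi^* G = L^2$ the bundle $G^m$ corresponds to $L^{2m}$ on $A$, and $m \geq 2$ is the same as $2m \geq 4$, which matches the (now established) bound $2m > 3$ under which $L^{2m}$ satisfies property $(N_1)$ on the abelian variety. This also explains why the corollary improves on Khaled's quadric-generation result, which required $m > 2$, while recovering Sasaki's projective normality for $m > 1$.
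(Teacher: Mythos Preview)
Your proof is correct and matches the paper's approach exactly: the corollary is stated immediately after the sentence ``by taking $p=1$, one gets,'' so the paper's proof is precisely the specialization you carry out. Your additional remarks about the ``half'' behaviour and the comparison with Khaled and Sasaki are accurate commentary but not part of the argument itself.
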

\noindent Moreover, the bound in Theorem \ref{main} is optimal (at least for $p \leq 1$) 
as, for instance, not any ample line bundle on $\KA$ is projectively normal. 
The projective normality of  ample line bundles on  Kummer varieties has been characterized by Khaled  in \cite[Theorem 2.3]{khaled2}.

The proof of Theorem \ref{main} rests both on standard techniques going back to Mumford and used (at least) by Sekiguchi,  Sasaki and Khaled to study equations defining abelian and Kummer varieties, and on the recent notion of \emph{fractional multiplication maps} of global sections of ample line bundles on abelian varieties,  introduced by Jiang and Pareschi in \cite{jipa} (we recall it, for reader's convenience, in \S \ref{fmm}). What makes them fit together properly is \cite[Proposition 3.5]{ca} and its proof, that will be also recalled in subsection \S \ref{bpfthreshold}.

After this manuscript was finished, the author was informed by Sofia Tirabassi that, jointly with Minyoung Jeon, they also proved Theorem  \ref{main} by a  different but related method. The author thanks her for sharing their preprint  \cite{jeti}.

\section{Kummer varieties: basic properties}\label{kummerbasic}

Let $\K$ be an algebraically closed field of characteristic $\Char(\K) \neq 2$. Given an abelian variety $A$ defined over $\K$, its associated \emph{Kummer variety}  $\KA$ is the quotient of $A$ by the action of the inverse involution 
$\iota \colon A \to A$. 
Let $$\pi \colon A \to \KA$$ be the natural quotient map.
An ample line bundle on $A$ is said to be \emph{totally symmetric}, if it is of the form $\pi^*G$ for a certain ample line bundle $G$ on $\KA$. 
Given an ample line bundle $G$ on $\KA$, we denote by $[-1]$ the canonical automorphism of $H^0(A, \pi^*G)$ induced by $\iota$, and by 
$$H^0(A, \pi^*G)_+ = \pi^* H^0(\KA, G) \subseteq H^0(A, \pi^*G)$$ 
the vector subspace of $H^0(A, \pi^*G)$ consisting of those elements that are invariant under the action of $[ -1 ]$. 

In the rest of this section, mostly following Sasaki \cite{sa1}, we collect some standard Lemmas that will be useful later.

\begin{lemma}\label{lemmads}
$\OO_{\KA}$ is a direct summand of $\pi_*\OO_A$.
\end{lemma}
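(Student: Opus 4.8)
The plan is to exploit the fact that $\pi \colon A \to \KA$ is a finite degree-$2$ morphism that is generically étale, using the trace map. Since $\Char(\K) \neq 2$, the composition $\OO_{\KA} \to \pi_*\OO_A \xrightarrow{\frac{1}{2}\mathrm{tr}} \OO_{\KA}$ is the identity, where the first arrow is the natural inclusion coming from the quotient structure and $\mathrm{tr}$ is the trace of the finite morphism $\pi$. Concretely, $\pi_*\OO_A$ carries the involution $\iota^{\#}$ induced by $\iota$, and since $2$ is invertible one has a splitting $\pi_*\OO_A = (\pi_*\OO_A)^{+} \oplus (\pi_*\OO_A)^{-}$ into the $(+1)$- and $(-1)$-eigensheaves; the $(+1)$-eigensheaf is precisely $\OO_{\KA}$, because $\KA = A/\iota$ means $\OO_{\KA}(U) = \OO_A(\pi^{-1}U)^{\iota}$ for every open $U$. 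This exhibits $\OO_{\KA}$ as a direct summand of $\pi_*\OO_A$, with complementary summand the anti-invariant part.

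The first step I would carry out is to recall that $\KA$ is a normal variety and $\pi$ is finite, so $\pi_*\OO_A$ is a coherent sheaf of $\OO_{\KA}$-algebras and the natural map $\OO_{\KA} \to \pi_*\OO_A$ is injective; on an affine open $U = \mathrm{Spec}\, R$ of $\KA$ with $\pi^{-1}(U) = \mathrm{Spec}\, S$, this is just the inclusion $R = S^{\Z/2} \hookrightarrow S$ of the ring of invariants. The second step is to define the $\OO_{\KA}$-linear retraction: on $U$ send $s \in S$ to $\tfrac{1}{2}\bigl(s + \iota^{\#}(s)\bigr) \in S^{\Z/2} = R$. This is visibly $R$-linear, compatible with restrictions, and restricts to the identity on $R$, so it glues to a morphism of $\OO_{\KA}$-modules $\pi_*\OO_A \to \OO_{\KA}$ splitting the inclusion. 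The third step is the formal conclusion that a sheaf with a left inverse to its structural inclusion is a direct summand.

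The only genuine subtlety — and the step I would be most careful about — is the assumption $\Char(\K)\neq 2$, which is exactly what makes $\tfrac12$ available and hence makes the averaging operator work; without it the statement is false in general. Everything else is routine and purely formal, so I would keep the write-up short: state the eigensheaf decomposition $\pi_*\OO_A = \OO_{\KA} \oplus (\pi_*\OO_A)^{-}$ coming from the order-two automorphism $\iota^{\#}$ together with invertibility of $2$, and identify the invariant part with $\OO_{\KA}$ via the definition of the quotient. No serious obstacle is expected.
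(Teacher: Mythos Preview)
Your argument is correct and is essentially the same as the paper's: the paper simply records that $\Char(\K)\neq 2=\deg(\pi)$ and cites the general splitting result \cite[Proposition 5.7(2)]{komo}, whose proof is exactly the trace/averaging argument you have written out in the special case of a $\Z/2$-quotient. There is no meaningful difference in approach, only in level of detail.
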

\begin{proof}
This is true as $\Char(\K) \neq 2 = \deg(\pi)$. See, e.g., \cite[Proposition 5.7(2)]{komo}.
\end{proof}
\begin{lemma}\label{even}
Let $G$ be an ample line bundle on $\KA$. Then there exists an  
ample line bundle $L$ on $A$ such that $\pi^* G = L^2$. 
\end{lemma}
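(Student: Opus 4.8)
The plan is to show that every totally symmetric ample line bundle on $A$ is a square. Since $\pi\circ\iota=\pi$, the line bundle $\pi^{*}G$ is totally symmetric (it is a pullback from $\KA$ by definition) and ample (because $\pi$ is finite and $G$ is ample); in particular $\iota^{*}\pi^{*}G\cong\pi^{*}G$, so $\pi^{*}G$ is symmetric. The argument splits into two parts: (i) for a totally symmetric line bundle $M$ the class $[M]\in\NS(A)$ is divisible by $2$; and (ii) granting (i), a square root of $\pi^{*}G$ can be extracted in $\Pic(A)$ without losing ampleness.

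For (i) I would use Mumford's theory of symmetric line bundles. To a symmetric $M$ one attaches a function $e_{*}^{M}\colon A[2]\to\{\pm 1\}$: using the unique isomorphism $\iota^{*}M\xrightarrow{\sim}M$ that is the identity on the fibre over the origin, one reads off the scalar it induces on each fibre $M_{e}$, $e\in A[2]$. Mumford's results say that $M$ descends to $A/\iota$, i.e.\ $M$ is totally symmetric, precisely when $e_{*}^{M}\equiv 1$; moreover $e_{*}^{M}$ is a quadratic function whose associated symmetric bilinear form is $(x,y)\mapsto\langle x,\phi_{M}(y)\rangle$, where $\langle\,\cdot\,,\,\cdot\,\rangle\colon A[2]\times\widehat{A}[2]\to\{\pm 1\}$ is the Weil pairing and $\phi_{M}\colon A\to\widehat{A}$ is the polarization homomorphism. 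Applying this to $M=\pi^{*}G$: since $e_{*}^{M}\equiv 1$ its bilinear form is trivial, and as the Weil pairing is perfect this forces $A[2]\subseteq\ker\phi_{\pi^{*}G}$. Therefore $\phi_{\pi^{*}G}$ factors through $[2]_{A}$, say $\phi_{\pi^{*}G}=g\circ[2]_{A}$ for a homomorphism $g\colon A\to\widehat{A}$; one checks $g$ is symmetric (from $\widehat{\phi_{\pi^{*}G}}=\phi_{\pi^{*}G}$, using that a homomorphism from the connected group $A$ into the finite group $\widehat{A}[2]$ is zero), hence $g=\phi_{N}$ for a line bundle $N$ on $A$ under the identification $\NS(A)\cong\Hom^{\mathrm{sym}}(A,\widehat{A})$. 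Then $\phi_{\pi^{*}G}=2g=\phi_{N^{\otimes 2}}$, i.e.\ $[\pi^{*}G]=2[N]$ in $\NS(A)$.

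For (ii), write $\pi^{*}G\cong N^{\otimes 2}\otimes\alpha$ with $\alpha\in\Pic0(A)$. Since $\Pic0(A)$ is a divisible group, choose $\beta\in\Pic0(A)$ with $\beta^{\otimes 2}\cong\alpha$ and set $L:=N\otimes\beta$, so that $L^{\otimes 2}\cong\pi^{*}G$. Finally $L$ is ample: $N^{\otimes 2}$ is algebraically equivalent to the ample bundle $\pi^{*}G$, hence ample, so $N$ is ample, and $L$ differs from $N$ only by the element $\beta\in\Pic0(A)$, so $L$ is ample too. The main obstacle I anticipate is the bookkeeping in (i): fixing the normalizations in Mumford's theory so that $M$ is totally symmetric if and only if $e_{*}^{M}\equiv 1$ and so that the symmetric bilinear form attached to $e_{*}^{M}$ is precisely the one described above. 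One can bypass (i) entirely by citing Mumford's classification of totally symmetric invertible sheaves directly --- \emph{every totally symmetric line bundle is the square of a (symmetric) line bundle} --- after which the lemma is immediate from ampleness of $\pi^{*}G$ together with the fact that $L^{\otimes 2}$ ample implies $L$ ample; I would take that route if a clean reference is at hand.
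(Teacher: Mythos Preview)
Your argument is correct and follows essentially the same route as the paper, which simply records the result as well-known and points to Sasaki \cite{sa1} and Mumford \cite{mum1,mum3}; what you have written is precisely the unpacking of those citations (Mumford's theory of $e_{*}^{M}$ for symmetric line bundles, the identification $\NS(A)\cong\Hom^{\mathrm{sym}}(A,\widehat{A})$, and divisibility of $\Pic0(A)$). Your closing remark that one could instead cite Mumford's classification of totally symmetric line bundles directly is exactly what the paper does.
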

\begin{proof}
This is well-known. A proof can be found in \cite[Proof of Lemma 1.2]{sa1}, where the result 
 follows from \cite[Corollary 4, p.\ 315]{mum1} and \cite[\S 23, Theorem 3]{mum3}. 
\end{proof}

Let us fix some notations.
Given an ample line bundle $L$ on $A$,    there exists an isogeny of abelian varieties associated to  $L$
\[
\phi_L : A \rightarrow \widehat{A}
\]
  which, on 
	 closed points, is given by $a \mapsto t_a^*L \otimes L^{\vee}$, 
where
 $t_a \colon A \to A$ is the translation morphism by $a$ and  
 $\widehat{A} = \Pic0 A$ is the dual abelian variety. 
The kernel of $\phi_L$ is denoted by $K(L)$.
Moreover, if $0 \neq k \in \mathbb Z$,
  $$\mu_k \colon A \to A$$ is  the multiplication-by-$k$ isogeny of $A$ and $A_k := \mathrm{Ker}(\mu_k)$ is its kernel.

\begin{lemma}\label{sasaki1}
Let $G$ be an ample line bundle on $\KA$. Then there exists an isogeny $f \colon A \to A^{\prime}$ and an ample totally symmetric line bundle $(\pi^{\prime})^* G^{\prime}$ on the abelian variety $A^{\prime}$ 
such that $f^* (\pi^{\prime})^* G^{\prime} = \pi^* G$ and $K((\pi^{\prime})^* G^{\prime}) = A^{\prime}_2$
\emph{(here,  $\pi^{\prime} \colon A^{\prime} \to \KApr$ denotes the quotient morphism of $A^{\prime}$)}.
Moreover, 
\begin{equation}\label{G'+}
H^0(A^{\prime}, (\pi^{\prime})^* G^{\prime})_+ = H^0(A^{\prime}, (\pi^{\prime})^* G^{\prime})
\end{equation}
and $G^{\prime}$ is globally generated.
\end{lemma}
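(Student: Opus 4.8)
The plan is to reduce to a ``minimal totally symmetric'' model by a single isogeny, in the spirit of Mumford and Sasaki. By Lemma \ref{even} we may write $\pi^*G\cong L^{\otimes 2}$ with $L$ an ample line bundle on $A$; here $L$ is automatically symmetric, since it differs from a symmetric square root of $\pi^*G$ (which exists because $\pi^*G$ is totally symmetric) by a $2$-torsion class of $\Pic0 A$, and such classes are fixed by $\iota^*$. It then suffices to produce an isogeny $f\colon A\to A'$ together with a \emph{symmetric principal polarization} $L'$ on $A'$ such that $f^*(L'^{\otimes 2})\cong\pi^*G$. Indeed, granting this and writing $\iota'$ for the inversion of $A'$ and $\pi'\colon A'\to\KApr$ for the quotient: $L'$ symmetric forces $L'^{\otimes 2}$ to be totally symmetric (its normalized symmetric isomorphism acts by $(\pm1)^2=1$ on the fibres over the fixed locus $A'_2$, so, as $\Char\K\neq 2$, the bundle $L'^{\otimes 2}$ descends through $\pi'$), hence $L'^{\otimes 2}=(\pi')^*G'$ for an ample $G'$ on $\KApr$ with $f^*(\pi')^*G'\cong\pi^*G$; and since $\phi_{L'^{\otimes 2}}=2\phi_{L'}$, the group $K((\pi')^*G')=\phi_{L'}^{-1}\big((\Pic0 A')[2]\big)$ has order $\#(\Pic0 A')[2]\cdot\#K(L')=\#A'_2$ (as $L'$ is principal), while it always contains $A'_2$; thus $K((\pi')^*G')=A'_2$.

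To construct $f$ and $L'$: on $K(L)=\Ker\phi_L$, with its perfect alternating commutator pairing, choose a maximal isotropic subgroup $H$ (these exist by the structure theory of finite abelian groups with a perfect alternating form). Since $H$ is isotropic, $L$ descends along the quotient $f\colon A\to A':=A/H$ to an ample line bundle $\bar L$ with $f^*\bar L\cong L$, and $K(\bar L)=H^\perp/H=0$ (as $H$ is maximal isotropic), so $\bar L$ is a principal polarization. It need not be symmetric, but its ``defect'' $\gamma:=\iota'^{\,*}\bar L\otimes\bar L^{-1}$ lies in $\Pic0 A'$ (inversion preserves $\NS$), and $f^*\gamma\cong\iota^*L\otimes L^{-1}\cong\OO_A$ because $L$ is symmetric, so $\gamma\in\Ker(f^*\colon\Pic0 A'\to\Pic0 A)$. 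Choosing $\eta\in\Pic0 A'$ with $\eta^{\otimes 2}\cong\gamma$ (possible, $\Pic0 A'$ being divisible) and setting $L':=\bar L\otimes\eta$, we get $\iota'^{\,*}L'\cong\bar L\otimes\gamma\otimes\eta^{-1}\cong\bar L\otimes\eta\cong L'$, so $L'$ is a symmetric principal polarization, and $f^*(L'^{\otimes 2})\cong(f^*\bar L)^{\otimes 2}\otimes f^*(\eta^{\otimes 2})\cong L^{\otimes 2}\otimes f^*\gamma\cong L^{\otimes 2}\cong\pi^*G$.

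I expect the only real subtlety to be exactly this point --- arranging that the descended polarization be \emph{symmetric}. A priori there is a parity obstruction to making $\bar L$ symmetric; what saves the day is that the defect $\gamma$ becomes trivial after pullback to $A$, so one may absorb it by twisting $\bar L$ by an arbitrary square root of $\gamma$ in $\Pic0 A'$ without perturbing $f^*(L'^{\otimes 2})$. (Alternatively, this whole reduction is classical --- essentially Sasaki's in \cite{sa1} --- and may simply be quoted.)

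The two remaining assertions are classical. Let $\Theta'$ be the (necessarily symmetric) theta divisor of $L'$. By the theorem of the square the divisors $t_a^*\Theta'+t_{-a}^*\Theta'$, $a\in A'$, all belong to $|L'^{\otimes 2}|$, they span it (here is where $L'$ principal enters), and each is $\iota'$-invariant because $\iota'^{\,*}\Theta'=\Theta'$ and $\iota'$ anticommutes with translations; together with the fact that the section cutting out $2\Theta'$ is even, this forces every section of $L'^{\otimes 2}$ to be even, i.e.\ $H^0(A',(\pi')^*G')_+=H^0(A',(\pi')^*G')$, which is \eqref{G'+}. Finally, $L'^{\otimes 2}$ is globally generated by Lefschetz's theorem (the square of an ample line bundle on an abelian variety is base-point free), and since by the previous step all of its sections are pulled back from $\KApr$, the bundle $G'$ is globally generated as well.
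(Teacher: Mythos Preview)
Your argument is correct and more self-contained than the paper's. The paper proves this lemma almost entirely by citation: the existence of $f$ and $G'$ with the stated properties is attributed to Sasaki \cite[Lemma~1.2]{sa1} (who in turn builds on Mumford), and the equality $H^0_+=H^0$ is deduced from the hypothesis $K((\pi')^*G')=A'_2$ via Mumford's Inverse Formula \cite[\S3, p.~331]{mum1}. Only the global generation of $G'$ is argued directly, by pushing the evaluation map of $(\pi')^*G'$ down along $\pi'$ and using that $\OO_{\KApr}$ is a direct summand of $\pi'_*\OO_{A'}$ (Lemma~\ref{lemmads}). By contrast, you \emph{construct} $f$ and $L'$ explicitly (descent along a maximal isotropic subgroup, followed by the twist by a square root of the symmetry defect), you prove \eqref{G'+} by the classical observation that the $\iota'$-invariant divisors $t_a^*\Theta'+t_{-a}^*\Theta'$ fill out $|L'^{\,2}|$ when $L'$ is principal, and you deduce global generation of $G'$ directly from \eqref{G'+} together with base-point freeness of $L'^{\,2}$, bypassing the direct-summand trick. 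Your route has the advantage of being essentially self-contained; the paper's has the advantage of brevity and of pointing to the exact classical sources.

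Two small remarks. First, your justification that any square root $L$ of $\pi^*G$ is symmetric tacitly uses that a \emph{symmetric} square root exists; this is exactly Mumford's characterization of totally symmetric bundles as squares of symmetric ones, so you may as well cite it (it is what underlies Lemma~\ref{even} in the paper). Second, the assertion that the divisors $t_a^*\Theta'+t_{-a}^*\Theta'$ span $|L'^{\,2}|$ is true but not entirely trivial; a clean way to see it is that $\mathcal G(L'^{\,2})$ acts irreducibly on $H^0(L'^{\,2})$ and moves the section cutting out $2\Theta'$ to sections cutting out $2t_a^*\Theta'=t_a^*\Theta'+t_{-a}^*\Theta'$ for $a\in A'_2$. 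A reference or a sentence to this effect would make the argument airtight.
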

\begin{proof}
The existence of $f$ and $G^{\prime}$ with these properties is noted in \cite[Lemma 1.2 and the comment above it]{sa1}. It basically follows as a by-product  of classical results of Mumford  \cite{mum1}. 
In particular, the condition $K((\pi^{\prime})^* G^{\prime}) = A^{\prime}_2$ implies \eqref{G'+} by \cite[\S 3, Inverse Formula, p.\ 331]{mum1}.

We just have to
observe that $G^{\prime}$ is globally generated. This easily follows from \eqref{G'+}. Indeed, 
 by Lemma \ref{even} and the fact that the square of any ample line bundle on an abelian variety is globally generated (see, e.g., \cite[pp.\ 60-61]{mum3}), we know that $(\pi^{\prime})^* G^{\prime}$ is  globally generated, i.e., the evaluation map    
\begin{equation}\label{eqsa1}
H^0(A^{\prime}, (\pi^{\prime})^* G^{\prime}) \otimes \OO_{A^{\prime}} \xrightarrow{\mathrm{ev}} (\pi^{\prime})^* G^{\prime}
\end{equation}
is surjective. 
 If we now apply $\pi^{\prime}_*$ to \eqref{eqsa1},  using  \eqref{G'+} and the projection formula, we get
\[
H^0(\KApr, G^{\prime}) \otimes \pi^{\prime}_*\OO_{A^{\prime}} \to G^{\prime} \otimes \pi^{\prime}_* \OO_{A^{\prime}},
\]
which is surjective too, because $\pi^{\prime}$ is finite.
Since there exists a surjective morphism $\pi^{\prime}_*\OO_{A^{\prime}} \to \OO_{\KApr}$ by Lemma \ref{lemmads}, one has the following commutative diagram 
\[
\xymatrix{
H^0(\KApr, G^{\prime}) \otimes \pi^{\prime}_* \OO_{A^{\prime}}  \ar@{->>}[r] \ar@{->>}[d] &G^{\prime} \otimes \pi^{\prime}_* \OO_{A^{\prime}} \ar@{->>}[d]  \\
H^0(\KApr, G^{\prime}) \otimes \OO_{\KApr}  \ar[r]^-{\mathrm{ev}} &G^{\prime}
}
\]
Hence, $G^{\prime}$ has no base points. 
\end{proof}

\begin{lemma}\label{sasaki2} By keeping the notation of Lemma \ref{sasaki1},
the isogeny $f$   sits in the  commutative diagram
\begin{equation*}\label{maindiag000}
\xymatrix{
A  \ar[r]^-{f} \ar[d]^-{\pi} &A^{\prime}  \ar[d]^-{\pi^{\prime}}  \\
\KA   \ar[r]^{f^{\prime}} &\KApr
}
\end{equation*}
where $f^{\prime}$ is the morphism induced by $f$, and one has
\begin{equation}\label{eqsa2}
(f^{\prime})^* G^{\prime} = G.
\end{equation}
\end{lemma}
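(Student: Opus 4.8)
The plan is to first explain why the isogeny $f$ descends to a morphism $f'$ between the Kummer quotients, which produces the commutative square, and then to deduce \eqref{eqsa2} by pulling back along $\pi$ and invoking Lemma \ref{lemmads}.

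\emph{Construction of $f'$ and commutativity.} Being an isogeny, $f\colon A\to A'$ is a homomorphism of abelian varieties, so it commutes with the inverse involutions: writing $\iota'$ for the inverse involution of $A'$, one has $f\circ\iota=\iota'\circ f$. Hence $\pi'\circ f\colon A\to\KApr$ is $\iota$-invariant, since $(\pi'\circ f)\circ\iota=\pi'\circ(\iota'\circ f)=(\pi'\circ\iota')\circ f=\pi'\circ f$. As $\pi\colon A\to\KA$ is by definition the quotient of $A$ by the action of $\iota$ (recall $\Char(\K)\neq 2$), its universal property gives a unique morphism $f'\colon\KA\to\KApr$ with $f'\circ\pi=\pi'\circ f$, which is the asserted diagram.

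\emph{Reduction of \eqref{eqsa2} to injectivity of $\pi^*$.} Pulling back $G'$ around the square and using the identity $f^*(\pi')^*G'=\pi^*G$ from Lemma \ref{sasaki1}, I obtain
\[
\pi^*\bigl((f')^*G'\bigr)=(f'\circ\pi)^*G'=(\pi'\circ f)^*G'=f^*(\pi')^*G'=\pi^*G ,
\]
so it suffices to prove that $\pi^*\colon\Pic(\KA)\to\Pic(A)$ is injective, i.e.\ that a line bundle $M$ on $\KA$ with $\pi^*M\cong\OO_A$ is trivial. This is where Lemma \ref{lemmads} is used: by the projection formula $M\otimes\pi_*\OO_A\cong\pi_*\pi^*M\cong\pi_*\OO_A$, so tensoring the splitting of $\pi_*\OO_A$ provided by Lemma \ref{lemmads} with $M$ exhibits $M$ as a direct summand of $\pi_*\OO_A$; taking global sections, $H^0(\KA,M)$ is then a direct summand of $H^0(\KA,\pi_*\OO_A)=H^0(A,\OO_A)=\K$, and the same holds for $M^{-1}$. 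Thus $h^0(\KA,M)\le 1$ and $h^0(\KA,M^{-1})\le 1$, and once we know both are equal to $1$, then $M\otimes M^{-1}\cong\OO_{\KA}$ forces $M\cong\OO_{\KA}$, which is \eqref{eqsa2}.

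\emph{The main obstacle.} The diagram and the chain of pullbacks are formal, so the one point that really needs care is the last step: showing that the sections of $M$ and $M^{-1}$ are nonzero, equivalently that the isomorphism $\pi^*M\cong\OO_A$ descends from $A$ to $\KA$. A priori $\pi^*$ need not be injective on isomorphism classes of line bundles, the only possible obstruction being a $2$-torsion class coming from the sign ambiguity in a $\Z/2$-linearization over $\iota$. I would rule this out by carrying the canonical linearizations along throughout: $(\pi')^*G'$ has its canonical $\iota'$-linearization, and its $f$-pullback — legitimate since $f$ is equivariant — is an $\iota$-linearization on $f^*(\pi')^*G'$; the key claim is that under the identification $f^*(\pi')^*G'=\pi^*G$ of Lemma \ref{sasaki1} this matches the canonical $\iota$-linearization of $\pi^*G$, which should follow from the functorial nature of that identification. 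Granting it, the isomorphism $\pi^*M\cong\OO_A$ respects descent data, hence descends, giving $(f')^*G'=G$ directly; Lemma \ref{lemmads}, via the bound $h^0(\KA,M)\le 1$, is exactly what prevents any leftover torsion ambiguity once the linearizations are matched.
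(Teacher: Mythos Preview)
The paper does not give its own argument here: it simply cites \cite[p.~327]{sa1}. So the comparison is between your attempt and Sasaki's construction.

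Your construction of $f'$ via the universal property of the quotient, and the chain of equalities yielding $\pi^*((f')^*G')=\pi^*G$, are correct and standard. The substance of the lemma is entirely in the passage from this to $(f')^*G'=G$, and that step is not completed in your proposal. You reduce it to a compatibility of $\iota$-linearizations under the identification $f^*(\pi')^*G'=\pi^*G$ of Lemma~\ref{sasaki1}, then write that this ``should follow from the functorial nature of that identification'' and proceed by ``granting it''. But that identification comes from Mumford's theory of theta groups and totally symmetric line bundles, and checking that it carries the canonical linearization on one side to the canonical linearization on the other is not a formality: it requires unwinding exactly the construction that Sasaki carries out. In other words, the point you leave open is the point the cited reference proves. (A minor slip: the clause ``$M\otimes M^{-1}\cong\OO_{\KA}$ forces $M\cong\OO_{\KA}$'' is not the argument you intend, since $M\otimes M^{-1}$ is always trivial; what you mean is that nonzero sections of $M$ and $M^{-1}$ give effective divisors $D,D'$ with $D+D'\sim 0$, hence $D=D'=0$.)

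In Sasaki's treatment the issue is handled at the level of the construction rather than a posteriori: one produces $A'$ as the quotient of $A$ by a suitable level subgroup of $K(\pi^*G)$, and the descent of the totally symmetric line bundle is set up so that the resulting $G'$ on $\KApr$ satisfies $(f')^*G'=G$ by design. Your abstract route, even if the linearization claim is eventually verified, ends up re-deriving that same descent datum, so it is not a genuine shortcut.
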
 
\begin{proof}
This is proved in \cite[p.\ 327]{sa1}.
\end{proof}

\begin{lemma}\label{gg}
Any ample line bundle on $\KA$ is globally generated.
\end{lemma}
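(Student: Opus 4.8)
The plan is to deduce this directly from Lemmas \ref{sasaki1} and \ref{sasaki2}. Let $G$ be an ample line bundle on $\KA$. First I would invoke Lemma \ref{sasaki1} to obtain the isogeny $f \colon A \to A^{\prime}$ together with the ample, globally generated line bundle $G^{\prime}$ on $\KApr$ satisfying $f^* (\pi^{\prime})^* G^{\prime} = \pi^* G$. Next, Lemma \ref{sasaki2} provides the morphism $f^{\prime} \colon \KA \to \KApr$ induced by $f$, fitting into the commutative square of Lemma \ref{sasaki2}, together with the key identity $(f^{\prime})^* G^{\prime} = G$.

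With these in hand, the statement follows from the elementary fact that the pullback of a globally generated line bundle along an arbitrary morphism of schemes is again globally generated. Concretely, if global sections $s_1, \dots, s_n \in H^0(\KApr, G^{\prime})$ generate every stalk, then for a point $x \in \KA$ with image $y = f^{\prime}(x)$, the pulled-back sections $(f^{\prime})^* s_1, \dots, (f^{\prime})^* s_n$ generate $((f^{\prime})^* G^{\prime})_x \otimes k(x)$, precisely because the $s_i$ generate $(G^{\prime})_y \otimes k(y)$. Hence $G = (f^{\prime})^* G^{\prime}$ has no base points. I expect no real obstacle here: all the substantive work — namely the global generation of $G^{\prime}$ on $\KApr$, which in turn rests on Lemma \ref{even}, Lemma \ref{lemmads}, and the diagram chase in the proof of Lemma \ref{sasaki1} — has already been carried out, so this lemma is just the transfer of that property back to $\KA$ via $f^{\prime}$.
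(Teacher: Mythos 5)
Your proposal is correct and is exactly the paper's argument: the paper deduces the lemma from the identity $(f^{\prime})^* G^{\prime} = G$ of Lemma \ref{sasaki2} and the global generation of $G^{\prime}$ from Lemma \ref{sasaki1}, using that pullbacks of globally generated line bundles are globally generated. Your write-up simply spells out that elementary pullback step in more detail.
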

\begin{proof}
It follows from \eqref{eqsa2} and the last statement of Lemma \ref{sasaki1}. 
\end{proof}

\section{The Green-Lazarsfeld property $(N_p)$}\label{GLprop}

In this section -- and in the next one -- $\K$ is an algebraically closed field of \emph{arbitrary} characteristic. Let $X$ be a projective variety over $\K$,  and $L$ be an ample line bundle on $X$.
The \emph{section algebra} of $L$ is  $$R_L := \bigoplus_{m \geq 0} H^0(X, L^m).$$ It is a finitely generated  module over the polynomial ring $S_L := \mathrm{Sym} (H^0(X, L))$. Hence, it admits a (unique up to isomorphism) minimal graded free resolution
\[
0 \rightarrow E_d(L) \rightarrow \ldots \rightarrow E_1(L) \rightarrow E_0(L) \rightarrow R_L \rightarrow 0,
\]
and it is natural to ask when the first steps of this resolution are as simple as possible.

\begin{definition}[\cite{grla}]
Let $p$ be an integer $\geq 0$. Then
$L$ is said to \emph{satisfy the property} $(N_p)$, if the first $p$ steps of the minimal graded free resolution of the $S_L$-algebra $R_L$ are linear, i.e., of the form
\[
\xymatrix{
S_L(-(p+1))^{\oplus i_p} \ar[r] \ar@{=}[d] &S_L(-p)^{\oplus i_{p-1}} \ar[r] \ar@{=}[d] &\ldots \ar[r]  &S_L(-2)^{\oplus i_1} \ar[r] \ar@{=}[d] &S_L \ar[r] \ar@{=}[d] &R_L \ar[r]  &0 \\
E_p(L) &E_{p-1}(L) & &E_1(L) &E_0(L)
}\]
\end{definition}

 This has the following geometric interpretation. The property $(N_0)$ means that $L$ is projectively normal. Note that if $L$ is  projectively normal  then $L$  is very ample (see \cite[pp.\ 38-39]{mum2}\footnote{A projectively normal line bundle is called \emph{normally generated} in \cite{mum2}.}). 
If $L$ satisfies $(N_1)$, then
$$\ldots \to S_L(-2)^{\oplus i_1}  \to I_{X/\mathbb{P}} \to 0$$ is
 a resolution of the homogeneous ideal $I_{X/\mathbb{P}}$ of $X \hookrightarrow \mathbb P := \mathbb{P}(H^0(X, L)^{\vee})$. So, in this case,
 $I_{X/\mathbb P}$ is generated by quadrics. The property $(N_2)$ means that the relations among these quadrics
are generated by linear ones, and so on.

\subsection{A  criterion}\label{gencrit} 

Let $M_L$ be the \emph{kernel bundle} of an ample and globally generated line bundle $L$ on $X$, that is the kernel of the evaluation morphism $$0 \to M_L \to H^0(X, L) \otimes \OO_X \xrightarrow{\rm{ev}} L \to 0. $$
If the vanishing
\[
H^1(X, M_L^{\otimes (i+1)} \otimes L^h) = 0
\]
holds for all integers $0 \leq i \leq p$ and
$h \geq 1$, then $L$ satisfies the property $(N_p)$.

\noindent This fact is well-known when $\Char(\K) = 0$ (see, e.g., \cite[pp.\ 510-511]{lasampl}), but it holds true as well in arbitrary characteristic, 
 as proved by the author in \cite[Proposition 4.1]{ca}, 
thanks to
 an algebraic result of Kempf \cite{ke}. We refer the interested reader to \cite[\S 4]{ca} for more details.\footnote{
There is a misprint in the statement of \cite[Proposition 4.1]{ca}, due to the fact that  \cite[Note 5 at p.\ 955]{ca} is slightly imprecise. We take the opportunity to give the correct statement here. The proof in \cite{ca} rests unchanged.
}

\section{Fractional multiplication maps of global sections on abelian varieties}\label{fmm}

Let $A$ be an abelian variety defined over $\K$ (here we still do not make any assumption on the characteristic of the field). 
We first fix some other notations.
A \emph{polarization} $\l \in \mathrm{Pic}\, A / \Pic0 A$ is the class of an ample line bundle $L$ on $A$.
The corresponding isogeny is $$\phi_{\l} = \phi_L \colon A \to \Ad.$$  
Let $\PP$  be the normalized Poincar\'e line bundle on $A \times \Ad$. 
 For any point $\gamma \in \widehat{A}$, the restriction of $\PP$ to $A \times \{\gamma\}$ is denoted by
$P_{\gamma}$.
Given a coherent sheaf $\FF$ on $A$, the (formal) $\Q$\emph{-twisted sheaf} $\FF \la x \l \ra$ is the equivalence class of pairs $(\FF, x \l)$, where $x \in \Q$ and the equivalence relation is
\[
(\FF, (h+x)\l) \sim (\FF \otimes L^{h}, x\l)
\]
for any ample line bundle $L$ representing the polarization $\l$ and $h \in \mathbb{Z}$. 
 To simplify the notation, sometimes we will write $\FF \la x L \ra$ instead of $\FF \la x \l \ra$.
Note that a sheaf $\FF$ may be naturally seen as the $\Q$-twisted sheaf $\FF \la 0 \l \ra$. 
Finally,  the \emph{tensor product} of two $\Q$-twisted sheaves $\FF \la x \l \ra$ and $\GG \la y \l \ra$ on $A$ is
$$\FF \la x \l \ra \otimes \GG \la y \l \ra := (\FF \otimes \GG) \la (x+y) \l \ra,$$ and
 the \emph{pullback} of $\FF \la x \l \ra$ by  an isogeny of abelian varieties $f \colon B \to A$
is 
$$f^*( \FF \la x \l \ra) := (f^* \FF) \la x f^*\l \ra.$$

Let now $L$ be an ample and globally generated line bundle on $A$. 
Jiang and Pareschi \cite{jipa} introduced a ``fractional'' analogue of the multiplication maps of global sections of $L$ (see, in particular, \cite[pp.\ 818-819]{jipa}), and studied it via the Fourier-Mukai-Poincar\'e transform. 

\noindent Given a positive rational number $x = \frac{a}{b}$  
and any closed point $\alpha \in \Pic0 A$, these \emph{fractional maps} are the multiplication maps 
\begin{equation}\label{framul}
H^0(A, L) \otimes H^0(A, L^{ab} \otimes P_{\alpha}) \to H^0(A, \mu_b^* L \otimes L^{ab} \otimes P_{\alpha})
\end{equation}
obtained by composing the natural inclusion $$H^0(A, L) \hookrightarrow H^0(A, \mu_b^*L)$$ with the usual multiplication map of global sections, where $\mu_b \colon A \to A$ is the multiplication by $b$.

\noindent By applying $\mu_b^*$ to the short exact sequence defining the kernel bundle $M_L$ of $L$, and then tensoring it with $L^{ab} \otimes P_{\alpha}$, one gets
\begin{equation}\label{syzbd2}
0 \to \mu_b^*M_L \otimes L^{ab} \otimes P_{\alpha} \to H^0(A, L) \otimes  L^{ab} \otimes P_{\alpha}  \to \mu_b^*L \otimes L^{ab} \otimes P_{\alpha}  \to 0.
\end{equation}
Then, taking the long exact sequence in cohomology, we  see that \eqref{framul} are surjective for any closed point $\alpha \in \Pic0 A$ if (and only if) one has
\begin{equation}\label{survan1}
H^1(A, \mu_b^* M_L \otimes L^{ab} \otimes P_{\alpha}) = 0
\end{equation}
for any closed point $\alpha \in \Pic0 A$.

\begin{definition}
Given   a coherent sheaf   $\FF$ on an abelian variety $A$,  
following Mukai's terminology \cite{mukai},  we say that $\FF$  is  $IT(0)$  (\emph{Index Theorem} with index $0$) 
if $$H^i(A, \FF \otimes P_{\alpha}) = 0,$$ for all $i \geq 1$ and $\alpha \in \Ad(\mathbb K)$.

More generally, a $\Q$-twisted sheaf $\FF \la x \l \ra$ 
 is $IT(0)$ if so is $\mu_b^* \FF \otimes L^{ab}$, where $x = \frac{a}{b}$   
and $L$ is a representative of the polarization $\l$.\footnote{This definition is dictated by the fact that  
$$\mu_b^*(\FF \la \frac{a}{b} \l \ra) = (\mu_b^*\FF) \la \frac{a}{b} \mu_b^*\l \ra = (\mu_b^*\FF) \la \frac{a}{b} \cdot b^2\l \ra = (\mu_b^*\FF) \la ab \l \ra.$$}
\end{definition}

\noindent So the above vanishing \eqref{survan1} holds true for all closed points $\alpha \in \Pic0 A$, if and only if
the $\Q$-twisted sheaf $M_L \la \frac{a}{b} \l \ra$ is $IT(0)$. 
Indeed, for any $i \geq 2$, the vanishing  $$H^i(A, \mu_b^* M_L \otimes L^{ab} \otimes P_{\alpha}) = 0$$   is always satisfied   thanks to \eqref{syzbd2}, and Mumford's vanishing saying that an ample line bundle on an abelian variety has no higher cohomologies, i.e., it is $IT(0)$ (see \cite[\S 16]{mum3}).  

Let us point out that, for any $\Q$-twisted sheaf, the property of being $IT(0)$   does not depend on the chosen representation $x = \frac{a}{b}$, because, if we write $x = \frac{ka}{kb}$, then   
$\FF \la \frac{ka}{kb} \l \ra$ is $IT(0)$ if and only if 
$\mu_k^* (\FF \la \frac{a}{b} \l \ra)$ is $IT(0)$, and this is the case if and only if $\FF \la \frac{a}{b} \l \ra$ is $IT(0)$ (see \cite[Proposition 1.3.3(1)]{catesi} for a characteristic-free proof of a slightly more general property). In particular, the surjectivity of \eqref{framul} for any closed $\alpha \in \Pic0 A$ is independent from the representation $x = \frac{a}{b}$.

\subsection{The basepoint-freeness threshold}\label{bpfthreshold}

Let $a \in A$ be a closed point and $\II_a \subseteq \OO_A$ be its ideal sheaf.  In \cite{jipa}, Jiang and Pareschi defined the \emph{basepoint-freeness threshold}  
\[
\beta(A, \l) = \mathrm{Inf}  \left\{ x \in \Q^+ \ | \ \mathcal{I}_a \la x \l \ra \ \rm{is\   \emph{$IT(0)$}} \right\}
\]
of a polarization $\l$ on $A$.\footnote{The definition given in \cite{jipa} is equivalent to ours thanks to \cite[Lemma 3.3]{ca}, where $\beta(A, \l)$ is denoted  $\epsilon_1(\l)$.}
The name is motivated by the fact that $$\beta(A, \l) \leq 1,$$ and the strict inequality holds if and only if $\l$ is basepoint-free (see \cite[p.\ 842]{jipa}).

When $\l$ is basepoint-free, \cite[Theorem D]{jipa}  gives that the surjectivity of the fractional multiplication maps of global sections of any representative $L$ of the class $\l$, is encoded by the  constant $\beta(A, \l)$. Namely,
\emph{the fractional multiplication maps \eqref{framul} are surjective for all closed points $\alpha \in \Pic0 A$ if and only if}
\begin{equation}\label{thmD}
\beta(A, \l) < \frac{x}{1+x}
\end{equation}
(see also \cite[Remark 2.1]{ca}).
 Using this fact, along with a $\Q$-twisted version of the preservation of vanishing of Pareschi and Popa \cite{papoIII} (that we will recall below), it was proved by the author  in \cite[Proposition 3.5]{ca} that, given an integer $p \geq 0$,  
if $\beta(A, \l) < \frac{1}{p+2}$
 then $M_L^{\otimes (p+1)} \otimes L^h$ is an $IT(0)$ sheaf for all $h \geq 1$ (and hence, by \S \ref{gencrit}, $L$  satisfies the property $(N_p)$).\footnote{See also \cite[Proposition 6.2]{ito} for an improvement of this criterion when $p \geq 1$, which has been recently applied  to the study of higher syzygies of very general abelian surfaces in \cite[Corollary B]{rojas}.}  
Basically, the proof of \cite[Proposition 3.5]{ca} consists in writing the sheaf $M_L^{\otimes (p+1)} \otimes L^h$
 as a product of $IT(0)$ $\Q$-twisted sheaves. This technique  will   also be  useful in the next section.

\section{Proof of  Theorem \ref{main}}\label{main2}

Thanks to the  results we recalled in \S \ref{fmm}, the proof of  Theorem \ref{main} essentially  reduces to apply to certain fractional multiplication maps, a well-established argument   used to study  equations defining abelian  varieties by Sekiguchi \cite{se1}, and later by   Sasaki \cite{sa1} and Khaled \cite{khaled} for Kummers.

We begin giving the following definition, that will be useful in a moment. 
\begin{definition}
 A coherent sheaf $\FF$ on an abelian variety $A$ is said to be a $GV$ (\emph{generic vanishing}) sheaf, if 
\[
\mathrm{codim}_{\Ad} \, V^i(A, \FF) \geq i \quad    \textrm{for all}\ i \geq 0,
\]
 where, for any $i$, 
$V^i(A, \FF)$ is a closed reduced subscheme of $\Ad$ whose set of closed points coincides with
$\{ \alpha \in \Ad(\mathbb K) \ | \ h^i(A, \FF \otimes P_{\alpha}) \neq 0 \}$.

A $\Q$-twisted sheaf $\FF \la x \l \ra$   is $GV$ if so is $\mu_b^* \FF \otimes L^{ab}$, where $x = \frac{a}{b}$ and 
$L$ is an ample line bundle on $A$ representing the polarization $\l$. 
\end{definition}
\begin{remark}\label{gvfac}
Note that an $IT(0)$ ($\Q$-twisted) sheaf is $GV$. Moreover,
if $\FF$ is such that $V^i(A, \FF) = \emptyset$ for $i \geq 2$, then $\FF$ is $GV$ if and only if  $V^1(A, \FF)$ is \emph{properly} contained in $\Pic0 A$, i.e., $h^1(A, \FF \otimes P_{\alpha}) = 0$ for $\alpha \in \Pic0 A$ \emph{general}.
\end{remark}
\noindent For ease of use, we also state the $\Q$-twisted preservation of vanishing, which follows at once from \cite[Proposition 3.1]{papoIII}.    
\begin{proposition}[\cite{ca}, Proposition 3.4]\label{presvan}
Let $\FF_1$ and $\FF_2$ be coherent sheaves on $A$, with one of them locally free. If $\FF_1 \la x \l \ra$ is $IT(0)$ and $\FF_2 \la y \l \ra$ is $GV$, then $\FF_1 \la x \l \ra \otimes \FF_2 \la y \l \ra$ is $IT(0)$.
\end{proposition}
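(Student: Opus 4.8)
The plan is to reduce the $\Q$-twisted assertion to the integral preservation of vanishing \cite[Proposition 3.1]{papoIII} by pulling everything back along a single multiplication isogeny. Unwinding the definition of the tensor product of $\Q$-twisted sheaves, what has to be shown is that $(\FF_1 \otimes \FF_2)\la (x+y)\l\ra$ is $IT(0)$. First I would put $x$ and $y$ over a common denominator, say $x = \frac{a}{b}$ and $y = \frac{c}{b}$ with $a, c \in \Z$ and $b$ a positive integer, so that $x+y = \frac{a+c}{b}$. This uses that the properties of being $IT(0)$ or $GV$ for a $\Q$-twisted sheaf do not depend on the chosen representation of the rational twist, which is recorded in \S\ref{fmm} for $IT(0)$ and holds for $GV$ by the same multiplication-pullback argument (cf.\ \cite[Proposition 1.3.3(1)]{catesi}); thus a single $b$ may be used simultaneously for both hypotheses.

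With this common denominator fixed and $L$ a representative of $\l$, the definition of $IT(0)$ for $(\FF_1\otimes\FF_2)\la\frac{a+c}{b}\l\ra$ amounts to checking that $\mu_b^*(\FF_1\otimes\FF_2)\otimes L^{b(a+c)}$ is $IT(0)$. The key step is the factorization
\[
\mu_b^*(\FF_1\otimes\FF_2)\otimes L^{b(a+c)} \;=\; \bigl(\mu_b^*\FF_1\otimes L^{ab}\bigr)\otimes\bigl(\mu_b^*\FF_2\otimes L^{cb}\bigr),
\]
which uses only that $\mu_b^*$ commutes with tensor products and that $L^{b(a+c)}=L^{ab}\otimes L^{cb}$. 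By the very definitions of $IT(0)$ and $GV$ for $\Q$-twisted sheaves, the hypotheses say precisely that the first factor $\mu_b^*\FF_1\otimes L^{ab}$ is $IT(0)$ and the second factor $\mu_b^*\FF_2\otimes L^{cb}$ is $GV$.

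To conclude, I would observe that one of the two factors is locally free: if $\FF_1$ (resp.\ $\FF_2$) is locally free, then so is the pullback $\mu_b^*\FF_1$ (as $\mu_b$ is a finite flat isogeny), and tensoring with the line bundle $L^{ab}$ (resp.\ $L^{cb}$) preserves local freeness. The integral preservation of vanishing \cite[Proposition 3.1]{papoIII} then applies to $\bigl(\mu_b^*\FF_1\otimes L^{ab}\bigr)\otimes\bigl(\mu_b^*\FF_2\otimes L^{cb}\bigr)$ and yields that this sheaf is $IT(0)$, which by the factorization above is exactly the statement that $(\FF_1\otimes\FF_2)\la(x+y)\l\ra$ is $IT(0)$. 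The only delicate point — and the main (mild) obstacle — is the passage to a common denominator, i.e.\ the representation-independence of $IT(0)$ and $GV$ that lets one align both twists over the same $b$; once that is in hand, the proof is a direct translation through the definitions into the known integral case.
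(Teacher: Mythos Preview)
Your proposal is correct and matches the paper's approach: the paper does not spell out a proof but states the proposition ``follows at once from \cite[Proposition 3.1]{papoIII}'', and your argument is precisely the expected unwinding of the $\Q$-twisted definitions to reduce to that integral result. The only point you flag as delicate---representation-independence allowing a common denominator---is handled exactly as you indicate, via \cite[Proposition 1.3.3(1)]{catesi}.
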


Let us go back to our Kummer variety $\KA$, hence from now on we  assume $\Char(\K) \neq 2$.  
Recall the setting of Lemma \ref{sasaki2}:  given the ample line bundle $G$ on $\KA$, we have a commutative diagram
\begin{equation}\label{maindiag00}
\begin{gathered}
\xymatrix{
A \ar[d]^-{\pi} \ar[r]^-{f} &A^{\prime} \ar[d]^-{\pi^{\prime}}\\
\KA \ar[r]^-{f^{\prime}} &\KApr
}
\end{gathered}
\end{equation}
where $f$ is an isogeny of abelian varieties,  $\pi$ (resp.\ $\pi^{\prime}$) is the quotient morphism of $A$ (resp.\ of $A^{\prime}$),
  $f^{\prime}$ is the morphism induced by $f$, and  there exists  an  ample 
line bundle $G^{\prime}$ on $\KApr$ such that $(f^{\prime})^* G^{\prime} = G$  and $H^0(A^{\prime}, (\pi^{\prime})^* G^{\prime})_+ = H^0(A^{\prime}, (\pi^{\prime})^* G^{\prime})$.

\noindent Let $L^{\prime}$ be an ample  
line bundle on $A^{\prime}$ such that $(\pi^{\prime})^* G^{\prime} = (L^{\prime})^2$ (Lemma \ref{even}).
Since, by Lemma \ref{sasaki1},  
  $\mu_2^{-1}(K(L^{\prime})) = K((L^{\prime})^2) = 
	A^{\prime}_2$,  
	 we have that $K(L^{\prime}) = \mu_2(\mu_2^{-1}(K(L^{\prime}))) =   \{ \OO_{A^{\prime}} \}$, 
  i.e., $h^0(A^{\prime}, L^{\prime}) = 1$. In particular, $L^{\prime}$ is not globally generated and hence
	\[
	\beta(A^{\prime}, (\pi^{\prime})^* G^{\prime}) = \frac{\beta(A^{\prime}, L^{\prime})}{2} = \frac{1}{2},
	\]
  where the first equality follows from the definition of the basepoint-freeness threshold (see \S \ref{bpfthreshold}).
Let $G^m$ be the $m$\emph{-th} power  line bundle on  $\KA$, where $m > \frac{p+2}{2}$ is an integer. As a way  to simplify the notation, let us define 
\[
H := G^m \quad \textrm{and} \quad H^{\prime} := (G^{\prime})^m.
\]
 Then,  
$(f^{\prime})^* H^{\prime} = H$ and $$\beta(A^{\prime}, (\pi^{\prime})^* H^{\prime}) = \frac{1}{2m} < \frac{1}{p+2}.$$ 
By Lemma \ref{gg},  $H$ is globally generated. So we have the short exact sequence
\begin{equation}\label{mainpr1}
0 \to M_H \to H^0(\KA, H) \otimes \OO_{\KA} \to H \to 0,
\end{equation}
where $M_H$ is the kernel bundle associated to $H$.
By \S \ref{gencrit}, in order to prove that $H$ satisfies the property $(N_p)$, we need to show that
\begin{equation}\label{genvan}
H^1(\KA, M_H^{\otimes (p+1)} \otimes H^h) = 0
\end{equation}
for all $h \geq 1$.
Thanks to Lemma \ref{lemmads} and the projection formula,
  one has    
	\begin{equation*}
	\begin{split}
	H^1(\KA, M_H^{\otimes (p+1)} \otimes H^h) &\subseteq   H^1(\KA, M_H^{\otimes (p+1)} \otimes H^h \otimes \pi_*\OO_A) \\
	&= H^1(A, \pi^*(M_H^{\otimes (p+1)} \otimes H^h)).
	\end{split}
	\end{equation*}
 Therefore, \eqref{genvan} holds true for all $h \geq 1$, if $H^1(A, \pi^*(M_H^{\otimes (p+1)} \otimes H^h)) = 0$ for all $h \geq 1$.
Using the $\Q$-twisted notation, let us write 
\[
\pi^*(M_H^{\otimes (p+1)} \otimes H^h) 
 = 
\big(\pi^*M_H \la \, \frac{1}{p+1} \, \pi^* H\, \ra \big)^{\otimes (p+1)} \otimes \pi^*H^{(h-1)}.
\]
Note that the line bundle  $\pi^*H^{(h-1)}$ is either ample, and hence $IT(0)$, or it is trivial.
Then, to get Theorem \ref{main} , thanks to Proposition \ref{presvan} it suffices to prove that
\begin{lemma}\label{IT0pas}
 The $\Q$-twisted sheaf $\pi^*M_H \la \, \frac{1}{p+1} \, \pi^* H\, \ra$ is $IT(0)$.
\end{lemma}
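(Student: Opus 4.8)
The plan is to reduce the claimed $IT(0)$ property on $A$ to the $IT(0)$ statement for a $\Q$-twisted sheaf on $A'$, and then to deduce the latter from the basepoint-freeness threshold bound $\beta(A',(\pi')^*H')=\tfrac{1}{2m}<\tfrac{1}{p+2}$ via the method of \cite[Proposition 3.5]{ca}. First I would relate $\pi^*M_H$ to a kernel bundle on the abelian variety side. Since $H=(f')^*H'$ and the diagram \eqref{maindiag00} commutes, we have $\pi^*H=f^*(\pi')^*H'$; because $\pi^*H$ is globally generated and $(\pi')^*H'$ is globally generated with $H^0(A',(\pi')^*H')_+=H^0(A',(\pi')^*H')$, pulling back the evaluation sequence of $H$ on $\KA$ by $\pi$ and comparing with the pullback by $f$ of the evaluation sequence of $(\pi')^*H'$ on $A'$ should identify $\pi^*M_H$ with $f^*M_{(\pi')^*H'}$ (up to the trivial summand coming from the fact that $H^0(\KA,H)=H^0(A,\pi^*H)_+$ injects into $H^0(A',(\pi')^*H')$; here the hypothesis $H^0(A',(\pi')^*H')_+=H^0(A',(\pi')^*H')$ is exactly what makes the spaces of sections match). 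So $\pi^*M_H\langle\tfrac{1}{p+1}\pi^*H\rangle = f^*\big(M_{(\pi')^*H'}\langle\tfrac{1}{p+1}(\pi')^*H'\rangle\big)$, and since $IT(0)$ for $\Q$-twisted sheaves is preserved and detected under pullback by isogenies (as recalled in \S\ref{fmm}), it is enough to show that $M_{(\pi')^*H'}\langle\tfrac{1}{p+1}(\pi')^*H'\rangle$ is $IT(0)$ on $A'$.

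Next I would run the argument of \cite[Proposition 3.5]{ca} on $A'$ with $L=(\pi')^*H'$ and $\l$ its polarization, for which $\beta(A',\l)=\tfrac{1}{2m}$. Write $x=\tfrac{1}{p+1}$. The point is that $\beta(A',\l)<\tfrac{1}{p+2}$ is precisely the inequality needed so that $M_L\langle x\l\rangle$ can be written as a tensor product of $IT(0)$ $\Q$-twisted sheaves. Concretely, one decomposes $M_L\langle \tfrac{1}{p+1}\l\rangle$ (or more precisely, following the structure of that proof, one shows each tensor factor $M_L\langle t\l\rangle$ appearing is $GV$ or $IT(0)$) using: (i) for a single copy, the surjectivity criterion \eqref{thmD}, namely that $M_L\langle \tfrac{a}{b}\l\rangle$ is $IT(0)$ whenever $\beta(A',\l)<\tfrac{a/b}{1+a/b}$; and (ii) the $\Q$-twisted preservation of vanishing, Proposition \ref{presvan}. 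One checks that $\beta(A',\l)=\tfrac{1}{2m}<\tfrac{1}{p+2}$ implies $\tfrac{1}{2m}<\tfrac{1/(p+1)}{1+1/(p+1)}=\tfrac{1}{p+2}$, which is the same inequality, so the single-copy twist $M_L\langle\tfrac{1}{p+1}\l\rangle$ is already $IT(0)$ — but to get the structural input for the Kummer computation one uses exactly the packaging from \cite[Proposition 3.5]{ca}. I would simply invoke that proposition and its proof, as the excerpt explicitly says it will be reused: applying it with the threshold value $\tfrac{1}{2m}<\tfrac{1}{p+2}$ gives that $M_{(\pi')^*H'}^{\otimes(p+1)}\otimes ((\pi')^*H')^h$ is $IT(0)$ for $h\geq 1$, and re-reading the decomposition it produces yields that the single $\Q$-twisted factor $M_{(\pi')^*H'}\langle\tfrac{1}{p+1}(\pi')^*H'\rangle$ is $IT(0)$, which is what we want.

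The main obstacle I anticipate is the identification $\pi^*M_H\cong f^*M_{(\pi')^*H'}$ — more precisely, making sure the spaces of global sections line up correctly. On $\KA$ one has $H^0(\KA,H)$; pulling back along $\pi$ this is $H^0(A,\pi^*H)_+\subseteq H^0(A,\pi^*H)$. On $A'$ the hypothesis $H^0(A',(\pi')^*H')_+=H^0(A',(\pi')^*H')$ means the full space of sections of $(\pi')^*H'$ is invariant, and via $f$ these pull back into the invariant part of $H^0(A,\pi^*H)$. So the subtlety is to verify that $f^*\colon H^0(A',(\pi')^*H')\to H^0(A,\pi^*H)_+$ has image exactly $H^0(A,\pi^*H)_+=H^0(\KA,H)$ pulled back, i.e., that no "new" invariant sections appear on $A$ that are not pulled back from $A'$; this is where the construction of $f$ and $G'$ in Lemma \ref{sasaki1} (which arranges $K((\pi')^*G')=A'_2$ and kills the relevant extra sections) is essential. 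Once this comparison of evaluation sequences is in place, the kernel bundles agree after pullback and the rest is the formal $\Q$-twisted $IT(0)$ bookkeeping. I would therefore spend most of the write-up on this comparison and then close quickly by citing \eqref{thmD}, Proposition \ref{presvan}, and \cite[Proposition 3.5]{ca}.
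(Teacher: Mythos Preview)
Your reduction hinges on the identification $\pi^*M_H\cong f^*M_{(\pi')^*H'}$ (up to a trivial summand), and this is where the argument breaks. For that identification you need $f^*H^0(A',(\pi')^*H')\subseteq H^0(A,\pi^*H)_+$, which in turn requires $H^0(A',(\pi')^*H')_+=H^0(A',(\pi')^*H')$. The paper only proves this for $G'$ (Lemma \ref{sasaki1}, using $K((\pi')^*G')=A'_2$ and Mumford's Inverse Formula), \emph{not} for $H'=(G')^m$; and in fact it is false for $m\ge 2$ (already on an elliptic curve: $(\pi')^*G'=\OO(2\cdot 0)$ has all sections even, but $(\pi')^*H'=\OO(2m\cdot 0)$ has the odd section $\wp'$ once $m\ge 2$). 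Without this, $f^*H^0(A',(\pi')^*H')$ is not contained in $H^0(A,\pi^*H)_+$, so there is no inclusion $f^*M_{(\pi')^*H'}\hookrightarrow\pi^*M_H$. If instead you pull back the kernel bundle of $H'$ on $\KApr$, you do get an inclusion $f^*(\pi')^*M_{H'}\hookrightarrow\pi^*M_H$ with trivial cokernel, but then you have merely reduced Lemma \ref{IT0pas} on $A$ to the identical statement on $A'$; you still face the same obstacle that $(\pi')^*M_{H'}$ is the kernel for the \emph{subspace} $H^0_+$, not the full $H^0$, so the threshold criterion \eqref{thmD} does not apply to it directly. (Also note the direction of your map is reversed: $f^*$ goes from sections on $A'$ to sections on $A$, not the other way.)

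The paper's proof avoids this by never trying to identify $\pi^*M_H$ with a pulled-back kernel bundle. Instead it proves the single vanishing $H^1(A,\mu_b^*\pi^*M_H\otimes\pi^*H^{ab})=0$ for a suitable $x_0=\frac{a}{b}<\frac{1}{p+1}$ by hand: this amounts to surjectivity of the map \eqref{maintes}, which is dominated (via diagram \eqref{surhor}) by a map built from $\big(H^0(A,\pi^*G)_+\big)^{\otimes m}$. The crucial trick is to work with $G$ rather than $H$: since $H^0(A',(\pi')^*G')_+=H^0(A',(\pi')^*G')$ \emph{does} hold, one has $f^*\big(H^0(A',(\pi')^*G')\big)^{\otimes m}\subseteq\big(H^0(A,\pi^*G)_+\big)^{\otimes m}$, and it suffices to prove surjectivity of \eqref{eqclaim}. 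This last step (Claim \ref{claim0}) requires the irreducibility of the Heisenberg representation of the theta group to pass from the surjectivity of the fractional maps \eqref{fracprime} on $A'$ (which \emph{is} governed by $\beta(A',(\pi')^*H')<\frac{1}{p+2}$) to surjectivity on $A$. Only then does one get $GV$ for $\pi^*M_H\langle x_0\,\pi^*H\rangle$, and a final application of Proposition \ref{presvan} with the positive twist $\frac{1}{p+1}-x_0$ upgrades $GV$ to $IT(0)$.
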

\noindent We will give the proof of this fact in the remaining part of the paper.
\begin{proof}[Proof of Lemma \ref{IT0pas}]
Keeping in mind  \eqref{maindiag00}, 
 let $x_0 < \frac{1}{p+1}$ be a positive rational number such that 
\[
\beta(A^{\prime}, (\pi^{\prime})^*H^{\prime}) < \frac{x_0}{1+x_0} < \frac{1}{p+2}. 
\]
Write $x_0 = \frac{a}{b}$.
We first aim to prove that 
\begin{equation}\label{aim1}
H^1(A, \mu_{b}^* \pi^*M_H \otimes \pi^*H^{ab}) = 0.
\end{equation}
Note that,  pulling back 
\eqref{mainpr1} by $\pi$, one has
\begin{equation}\label{maindiag1}
0  \to \pi^*M_H \to H^0(A, \pi^*H)_+ \otimes \OO_A  \to \pi^*H \to 0. 
\end{equation}
Then, taking the pullback 
 by $\mu_{b}$ of \eqref{maindiag1} and tensoring it with $\pi^*H^{ab}$,  we get
\[
0  \to \mu_{b}^* \pi^*M_H \otimes \pi^*H^{ab}  \to H^0(A, \pi^*H)_+ \otimes \pi^*H^{ab}  \to \mu_{b}^* \pi^*H \otimes \pi^* H^{ab}  \to 0. 
\]
So, in order to obtain  the desired vanishing above, it is enough to prove the surjectivity of  the  map
\begin{equation}\label{maintes}
 H^0(A, \pi^*H)_+ \otimes H^0 (A, \pi^*H^{ab}) \to H^0(A, \mu_{b}^* \pi^*H \otimes \pi^* H^{ab})
\end{equation}
obtained by composing the  inclusion $H^0(A, \pi^*H)_+  \hookrightarrow H^0(A, \mu_{b}^* \pi^*H)$ with the usual multiplication map of global sections. Since the next diagram  is commutative,
 it suffices in turn to show that its horizontal map  is surjective:
\begin{equation}\label{surhor}
\begin{gathered}
\xymatrix{
\big( H^0(A, \pi^* G)_+ \big)^{\otimes m} \otimes H^0(A, \pi^* H^{ab}) \ar[r] \ar[d] &H^0(A, \mu_b^* \pi^* H \otimes \pi^* H^{ab}) \\
H^0(A, \pi^*H)_+ \otimes H^0 (A, \pi^*H^{ab}) \ar[ur]_-{\eqref{maintes}}
}
\end{gathered}
\end{equation}
The horizontal map in \eqref{surhor} is of course 
 defined similarly to \eqref{maintes}: namely, we compose the natural inclusion $$\big( H^0(A, \pi^* G)_+ \big)^{\otimes m} 
 \hookrightarrow \big( H^0(A, \mu_b^* \pi^* G) \big)^{\otimes m}$$ with the  multiplication map of global sections
$$
\big( H^0(A, \mu_b^* \pi^* G) \big)^{\otimes m} \otimes H^0(A, \pi^* H^{ab}) \rightarrow H^0(A, \mu_b^* \pi^* H \otimes \pi^* H^{ab}).
$$

With the goal of proving the  surjectivity of the horizontal map in \eqref{surhor},
 let us first consider the following  diagram of maps of global sections on $A^{\prime}$, where   $\beta_0 \in \Pic0 A^{\prime}$ is \emph{general}, and
$\delta$ is an arbitrarily fixed closed point of $\Pic0 A^{\prime}$.
\begin{equation}\label{m2}
\begin{gathered}
\resizebox{6.0 in}{!}{
\xymatrix{
\big(H^0((\pi^{\prime})^* G^{\prime}) \big)^{\otimes (m-1)} \otimes H^0((\pi^{\prime})^* G^{\prime} \otimes P_{\beta_0}) 
\otimes H^0((\pi^{\prime})^* (H^{\prime})^{ab} \otimes P_{\delta}) \ar[r] \ar@{->>}[d]
 &H^0(\mu_b^* (\pi^{\prime})^* H^{\prime} \otimes (\pi^{\prime})^* (H^{\prime})^{ab} \otimes P_{b\beta_0 + \delta}) \\
H^0((\pi^{\prime})^* H^{\prime} \otimes P_{\beta_0}) \otimes H^0((\pi^{\prime})^* (H^{\prime})^{ab} \otimes P_{\delta}) \ar@{->>}[ur]
}}
\end{gathered}
\end{equation}
The vertical arrow is given by usual multiplication maps of global sections. It is surjective by classical results of Sekiguchi \cite[Main theorem]{se2} (see also \cite[Remark below Proposition 1.5]{se1}) and \cite[Theorem 2.4]{se3}. 
The diagonal arrow is a
 fractional multiplication map,   
which is surjective  for any closed point $\delta \in \Pic0 A^{\prime}$. Indeed, $\beta(A^{\prime}, (\pi^{\prime})^* H^{\prime}) < \frac{x_0}{1+ x_0}$ and hence the $\Q$-twisted sheaf $M_{(\pi^{\prime})^*H^{\prime}} \la \frac{a}{b} (\pi^{\prime})^* H^{\prime} \ra$ is $IT(0)$ (see \S \ref{fmm} and, in particular, \eqref{thmD}). If $z_0 \in A^{\prime}$ is such that $t_{z_0}^* (\pi^{\prime})^* H^{\prime} = (\pi^{\prime})^* H^{\prime} \otimes P_{\beta_0}$, then 
\begin{equation*}
\begin{split}
M_{(\pi^{\prime})^*H^{\prime} \otimes P_{\beta_0}} \la \frac{a}{b} (\pi^{\prime})^* H^{\prime} \ra &= t_{z_0}^* (M_{(\pi^{\prime})^*H^{\prime}}) \la \frac{a}{b} (\pi^{\prime})^* H^{\prime} \ra \\
 &= t_{z_0}^* (M_{(\pi^{\prime})^*H^{\prime}} \la \frac{a}{b} (\pi^{\prime})^* H^{\prime} \ra). 
\end{split}
\end{equation*}
Therefore,  $M_{(\pi^{\prime})^*H^{\prime} \otimes P_{\beta_0}} \la \frac{a}{b} (\pi^{\prime})^* H^{\prime} \ra$ is $IT(0)$ as well by \cite[Proposition 1.3.3(1)]{catesi}, i.e., the diagonal arrow in \eqref{m2} is surjective.

Now, since the maps in \eqref{m2} are surjective for a general $\beta_0$ and since the set of torsion points is dense in $A^{\prime}$, we can assume that $\beta_0$ is a $k b$-torsion point for a certain $k \geq 1$, and, since we are free to write 
 $\frac{a}{b} = \frac{ka}{kb}$, we can actually assume 
$k = 1$.
So there exists a $b$-torsion closed point, that for simplicity we still denote by $\beta_0$, such that the composition \eqref{m2} simplifies as
\begin{multline}\label{m2s}
\big(H^0((\pi^{\prime})^* G^{\prime}) \big)^{\otimes (m-1)} \otimes H^0((\pi^{\prime})^* G^{\prime} \otimes P_{\beta_0}) 
\otimes H^0((\pi^{\prime})^* (H^{\prime})^{ab} \otimes P_{\delta}) 	\\
	\twoheadrightarrow
 H^0(\mu_b^* (\pi^{\prime})^* H^{\prime} \otimes (\pi^{\prime})^* (H^{\prime})^{ab} \otimes P_{\delta}).
\end{multline}
Let $z_1 \in A^{\prime}$ be a closed point  such that $t_{z_1}^* (\pi^{\prime})^* G^{\prime} = (\pi^{\prime})^* G^{\prime} \otimes P_{\beta_0}$, and let $z_2 \in A^{\prime}$ with $b z_2 = z_1$. Then, 
\begin{equation}\label{z2}
t_{z_2}^*\mu_b^* (\pi^{\prime})^* G^{\prime} = \mu_b^* t_{z_1}^* (\pi^{\prime})^* G^{\prime} = \mu_b^* (\pi^{\prime})^* G^{\prime}.
\end{equation}
Moreover, given any closed point  $\beta \in \Pic0 A^{\prime}$, one has that
\begin{equation}\label{z22}
t_{z_2}^*((\pi^{\prime})^*(H^{\prime})^{ab} \otimes P_{\beta}) = (\pi^{\prime})^*(H^{\prime})^{ab} \otimes P_{\delta}
\end{equation}
 for a certain $\delta \in \Pic0 A^{\prime}$.\footnote{Actually, it is easy to see that $\delta = \beta$. This precision is however unnecessary here.} 
Therefore, from \eqref{z2} and \eqref{z22}
we get a commutative diagram
\begin{equation*}
\begin{gathered}
\resizebox{6.0 in}{!}{
\xymatrix{
\big(H^0((\pi^{\prime})^* G^{\prime})\big)^{\otimes (m-1)} \otimes 
H^0((\pi^{\prime})^* G^{\prime} \otimes P_{\beta_0}) \otimes H^0((\pi^{\prime})^* (H^{\prime})^{ab} \otimes P_{\delta}) \ar[r] &H^0(\mu_b^*(\pi^{\prime})^* H^{\prime} \otimes (\pi^{\prime})^* (H^{\prime})^{ab} \otimes P_{\delta}) \\
\big(H^0((\pi^{\prime})^* G^{\prime})\big)^{\otimes (m-1)} \otimes H^0((\pi^{\prime})^* G^{\prime}) \otimes 
H^0((\pi^{\prime})^* (H^{\prime})^{ab} \otimes P_{\beta}) \ar[u]^-{\mathrm{id}\, \otimes\, t_{z_1}^* \, \otimes \, t_{z_2}^*} \ar[r] &H^0(\mu_b^*(\pi^{\prime})^* H^{\prime} \otimes (\pi^{\prime})^* (H^{\prime})^{ab} \otimes P_{\beta}) \ar[u]^-{t_{z_2}^*} 
}}
\end{gathered}
\end{equation*}
where the top arrow is \eqref{m2s}, which is surjective, and the bottom one is  similarly defined.

To sum up, by the above discussion 
 we  have obtained that
the maps of global sections on $A^{\prime}$ 
\begin{multline}\label{fracprime}
\big( H^0(A^{\prime}, (\pi^{\prime})^* G^{\prime}) \big)^{\otimes m} \otimes H^0 (A^{\prime}, (\pi^{\prime})^*(H^{\prime})^{ab} \otimes P_{\beta})  \\
\to H^0(A^{\prime}, \mu_{b}^* (\pi^{\prime})^*H^{\prime} \otimes (\pi^{\prime})^* (H^{\prime})^{ab} \otimes P_{\beta})
\end{multline}
are surjective, for \emph{all} closed points  $\beta \in \Pic0 A^{\prime}$.
Using this fact, we will prove ahead the following
\begin{claim}\label{claim0}
Let $f^*\big(H^0(A^{\prime}, (\pi^{\prime})^*G^{\prime})\big)$ be the image of the map $$f^* \colon H^0(A^{\prime}, (\pi^{\prime})^*G^{\prime})  \to H^0(A, \pi^*G).$$ 
Then the natural map
\begin{equation}\label{eqclaim}
f^*\big(H^0(A^{\prime}, (\pi^{\prime})^*G^{\prime})\big)^{\otimes m} \otimes H^0 (A, \pi^*{H}^{ab}) \to H^0(A, \mu_{b}^* \pi^*H \otimes \pi^* H^{ab}),
\end{equation}
given by composing the inclusion 
\begin{equation*}
f^*\big(H^0(A^{\prime}, (\pi^{\prime})^*G^{\prime})\big)^{\otimes m} \hookrightarrow \big( H^0(A, \pi^*G) \big)^{\otimes m} \hookrightarrow \big( H^0(A, \mu_{b}^* \pi^*G) \big)^{\otimes m}
\end{equation*}
 with the usual multiplication map of global sections,
  is surjective.  
\end{claim}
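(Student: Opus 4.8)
The plan is to derive Claim~\ref{claim0} from the surjectivity of the fractional multiplication maps \eqref{fracprime} on $A^{\prime}$ by pushing everything down along the isogeny $f$ of \eqref{maindiag00}. First I would record the compatibilities that diagram provides: since $\pi^{\prime}\circ f = f^{\prime}\circ\pi$ one has $\pi^*G = f^*(\pi^{\prime})^*G^{\prime}$ and $\pi^*H = f^*(\pi^{\prime})^*H^{\prime}$, and since $f$ is a homomorphism of abelian varieties it commutes with multiplication by $b$, so also $\mu_b^*\pi^*H = f^*\big(\mu_b^*(\pi^{\prime})^*H^{\prime}\big)$. Hence the three line bundles $\mu_b^*\pi^*G$, $\pi^*H^{ab}$ and $\mu_b^*\pi^*H\otimes\pi^*H^{ab}$ occurring in \eqref{eqclaim} are all of the shape $f^*\mathcal{N}^{\prime}$ for a line bundle $\mathcal{N}^{\prime}$ on $A^{\prime}$, and the inclusion $f^*\big(H^0(A^{\prime},(\pi^{\prime})^*G^{\prime})\big)\hookrightarrow H^0(A,\mu_b^*\pi^*G)$ used to define \eqref{eqclaim} is obtained by applying $f^*$ to the inclusion $H^0(A^{\prime},(\pi^{\prime})^*G^{\prime})\hookrightarrow H^0(A^{\prime},\mu_b^*(\pi^{\prime})^*G^{\prime})$.

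The key input is the standard structure of $f_*\OO_A$. Setting $K:=\Ker\big(\widehat{f}\colon\Pic0 A^{\prime}\to\Pic0 A\big)$, a finite subgroup of $\Pic0 A^{\prime}$ of order $\deg f$, there is an isomorphism of $\OO_{A^{\prime}}$-algebras $f_*\OO_A\cong\bigoplus_{\beta\in K}P_{\beta}$, the product on the right being induced by the canonical isomorphisms $P_{\beta}\otimes P_{\delta}\cong P_{\beta+\delta}$. Together with the projection formula this gives, for every line bundle $\mathcal{N}^{\prime}$ on $A^{\prime}$, a decomposition $H^0(A,f^*\mathcal{N}^{\prime})=\bigoplus_{\beta\in K}H^0(A^{\prime},\mathcal{N}^{\prime}\otimes P_{\beta})$ under which $f^*\big(H^0(A^{\prime},\mathcal{N}^{\prime})\big)$ is exactly the summand indexed by $\beta=0$; and, because $f_*\OO_A$ is a $K$-graded sheaf of algebras, under these decompositions the multiplication maps of global sections on $A$ are the direct sums over tuples of characters in $K$ of the corresponding multiplication maps of twisted global sections on $A^{\prime}$, the summands indexed by $\beta$ and $\delta$ multiplying into the one indexed by $\beta+\delta$.

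Granting this, Claim~\ref{claim0} becomes bookkeeping. The factor $f^*\big(H^0(A^{\prime},(\pi^{\prime})^*G^{\prime})\big)^{\otimes m}$ in the source of \eqref{eqclaim} selects the trivial character on each of the first $m$ slots, so the source decomposes as $\bigoplus_{\delta\in K}\big(H^0(A^{\prime},(\pi^{\prime})^*G^{\prime})\big)^{\otimes m}\otimes H^0(A^{\prime},(\pi^{\prime})^*(H^{\prime})^{ab}\otimes P_{\delta})$, the target decomposes as $\bigoplus_{\gamma\in K}H^0(A^{\prime},\mu_b^*(\pi^{\prime})^*H^{\prime}\otimes(\pi^{\prime})^*(H^{\prime})^{ab}\otimes P_{\gamma})$, and \eqref{eqclaim} sends the $\delta$-summand of the source into the $\delta$-summand of the target. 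Using that $\mu_b^*$ is a homomorphism of section algebras, so that multiplying the $m$ copies of $G^{\prime}$ before or after applying $\mu_b^*$ yields the same map, each of these component maps is precisely the map \eqref{fracprime} with $\beta=\delta$, hence surjective. Therefore \eqref{eqclaim} is surjective, which proves the claim.

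The one point that deserves genuine care is the compatibility of the decomposition $f_*\OO_A\cong\bigoplus_{\beta\in K}P_{\beta}$ with the multiplication of sections, i.e.\ that it is a grading by the group $K$ rather than merely a direct sum, since this is exactly what makes the summands of \eqref{eqclaim} line up with the maps \eqref{fracprime}; the remaining steps are formal manipulations with the diagram \eqref{maindiag00}, the projection formula, and the behaviour of $f^*$ on line bundles algebraically equivalent to zero.
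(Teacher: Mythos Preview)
Your argument is correct---and more direct than the paper's---provided the splitting $f_*\OO_A\cong\bigoplus_{\beta\in K}P_\beta$ really holds as a $K$-graded $\OO_{A^{\prime}}$-algebra. This is the case precisely when $\Ker\widehat{f}$ is \'etale (equivalently, $\Ker f$ is of multiplicative type), which is automatic when $\Char(\K)=0$ or, more generally, when $\deg f$ is prime to the characteristic. In that regime your bookkeeping is accurate and each graded piece of \eqref{eqclaim} is literally one of the maps \eqref{fracprime}, so surjectivity follows. By contrast, the paper argues via Mumford's theta-structure theorem: it shows that the image $V$ of \eqref{eqclaim} contains the nonzero subspace $W=f^*H^0(A^{\prime},\MM)$ and is carried into itself under the action of the theta group $\mathcal G(\LL)$, and then invokes the irreducibility of that representation to force $V=H^0(A,\LL)$. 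Your route avoids theta groups entirely, which is a genuine simplification where it applies.

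The gap is in positive characteristic. The paper works over any algebraically closed field with $\Char(\K)\neq 2$, and nothing in the construction forces $\Ker f$ to be of multiplicative type: one has $\deg f=h^0(A,L)$ (where $\pi^*G=L^2$), which may be divisible by $p=\Char(\K)$, and then $f_*\OO_A$ need not split. Already when $\Ker f$ is the constant group scheme $\Z/p\Z$, the pushforward $f_*\OO_A$ is the regular representation of $\Z/p\Z$ in characteristic $p$, a single indecomposable unipotent bundle rather than a sum of $p$ line bundles, and your identification of \eqref{eqclaim} with a direct sum of the maps \eqref{fracprime} breaks down. The theta-group argument is designed precisely to circumvent this: the irreducibility of the Heisenberg representation is formulated scheme-theoretically (over arbitrary local test rings, as in the diagrams \eqref{comd0}--\eqref{comd1}) and holds uniformly in all characteristics.
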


\noindent Assuming the  Claim for the moment,  
the surjectivity of the horizontal map in \eqref{surhor} finally follows  because 
\[
f^*\big(H^0(A^{\prime}, (\pi^{\prime})^*G^{\prime})\big)^{\otimes m} = 
  f^*  \big( H^0(A^{\prime}, (\pi^{\prime})^* G^{\prime})_+\big)^{\otimes m} 
 \subseteq  \big( H^0(A, \pi^*G)_+ \big)^{\otimes m},
\]
where the 
equality is \eqref{G'+}.

So far,  we only proved \eqref{aim1}. Then, 
by upper-semicontinuity, 
 $$H^1(A, \mu_{b}^* \pi^*M_H \otimes \pi^*H^{ab} \otimes P_{\alpha}) = 0$$ for $\alpha \in \Pic0 A$ general.
This means (see Remark \ref{gvfac} above) that the $\Q$-twisted sheaf $\pi^*M_H \la \, x_0\,  \pi^* H\, \ra$ is $GV$. But
\[
\pi^*M_H \la \, \frac{1}{p+1} \, \pi^* H\, \ra = \pi^*M_H \la \, x_0\,  \pi^* H\, \ra \otimes \OO_A \la \, (\frac{1}{p+1} - x_0) \, \pi^* H\, \ra,
\]
and $\OO_A \la \, (\frac{1}{p+1} - x_0) \, \pi^* H\, \ra$ is $IT(0)$, as $\frac{1}{p+1} - x_0 > 0$. Therefore,  $\pi^*M_H \la \, \frac{1}{p+1} \, \pi^* H\, \ra$ is $IT(0)$ by  applying Proposition \ref{presvan} once again.
\end{proof}

It only remains to give the 
\begin{proof}[Proof of  Claim \ref{claim0}]
This is an application of the theta structure theorem of Mumford (see \cite[Appendix]{se3} for a characteristic-free discussion), that is the irreducibility of the Heisenberg representation of the theta group. 
The argument below goes similarly to \cite[Proof of Proposition 1.1]{sa1},  replacing with the surjectivity of \eqref{fracprime} 
  Sekiguchi's result \cite[Theorem 2.4]{se3} 
 on the surjectivity of usual multiplication maps  of global sections of line bundles on abelian varieties,
which extends to arbitrary characteristic  
a  previous theorem of Koizumi \cite{koizumi} for complex abelian varieties.

In order (to try) to simplify the notation, we put
\[\LL := \mu_b^*\pi^*H \otimes \pi^*H^{ab}
\]
and
\[
 \MM := \mu_b^*(\pi^{\prime})^*H^{\prime} \otimes (\pi^{\prime})^*(H^{\prime})^{ab}
\]
 and note that $f^*\MM = \LL$.  
As a consequence of Mumford's theorem (see \cite[Corollary]{se2}), given two vector subspaces $V$ and $W$ of $H^0(A, \LL)$ with $V \supseteq W \neq 0$, if  

\noindent 
$\bullet$ for any local ring $(R, \m)$  over $\K$ with residue field $\K$ and any $R$-valued point $\lambda$ of the theta group $\mathcal{G}(\LL)$ one has $U_{\lambda}(W \otimes R) \subseteq V \otimes R$,

\noindent then $V = H^0(A, \LL)$. 

\noindent Let us recall that  $\mathcal{G}(\LL)(R)$ is the set of
 pairs $(a, \varphi)$ where $a \in A(R)$  and $\varphi \colon t_a^* \LL_S \xrightarrow{\simeq} \LL_S$ is an isomorphism.
Here,  $\LL_S := p^* \LL$ is the pullback by the first projection $p \colon A \times S \to A$, where  $S := \mathrm{Spec}(R)$.  
The theta group $\mathcal{G}(\LL)$  is a group scheme naturally acting  on $H^0(A, \LL)$, and
its action is denoted by $U$.

 We want to apply the above criterion with $V$ equal to the  image of the map \eqref{eqclaim}, and $W := f^*(H^0(A^{\prime}, \MM))$
(note that the inclusion $W \subseteq V$ follows from the surjectivity of \eqref{fracprime} with $\beta = \OO_{A^{\prime}}$). 
Let   $K(\LL)$ be the kernel of the isogeny $\phi_{\LL} \colon A \to \Ad$ defined by $\LL$. There is, by definition, a  canonical surjection
$j \colon \mathcal{G}(\LL) \to K(\LL)$. Let
$u := j(\lambda)  \in K(\LL)(R)$. 
We have the following commutative diagram 
\begin{equation}\label{comd0}
\begin{gathered}
\resizebox{6.0 in}{!}{
\xymatrix{
H^0(A_S, \LL_S) \simeq H^0(A, \LL) \otimes R \ar[r]^-{t_u^*} \ar@/^2pc/[rr]^-{U_{\lambda}} &H^0(A_S, t_u^*(\LL_S)) \ar[r]^-{\simeq}
 &H^0(A_S, \LL_S) \\
H^0(A^{\prime}_S, \MM_S) \simeq H^0(A^{\prime}, \MM) \otimes R \ar[u]^-{f^*} \ar[r]^-{t_{f(u)}^*} & H^0(A^{\prime}_S, t_{f(u)}^*\MM_S) \simeq H^0(A^{\prime}_S, \MM_S \otimes  P_{\gamma})  \ar[u]^-{f^*}
}}
\end{gathered}
\end{equation}
where  $A_S := A \times S$, $A^{\prime}_S := A^{\prime} \times S$, and 
$\gamma := \phi_{\MM}(f(u)) \in \widehat{A^{\prime}}(R)$.
On the other hand,
 since $f^*(P_{\gamma}) = \phi_{f^*\MM}(u) = \phi_{\LL}(u) = \OO_A$,
 we also get the  commutative diagram 
\begin{equation}\label{comd2}
\begin{gathered}
\resizebox{5.7 in}{!}{
\xymatrix{
(f^*\big(H^0(A^{\prime}, (\pi^{\prime})^*G^{\prime})\big)^{\otimes m} \otimes R) \otimes (H^0 (A, \pi^*{H}^{ab}) \otimes R) \ar[r]^-{\eqref{eqclaim} \otimes R} &H^0(A_S, \LL_S)  \\
( \big(H^0(A^{\prime}, (\pi^{\prime})^*G^{\prime})\big)^{\otimes m} \otimes R)\otimes H^0 (A^{\prime}_S, ((\pi^{\prime})^*(H^{\prime})^{ab})_S \otimes P_{\gamma}) \ar[r] \ar[u]^-{(f^*)^{\otimes m} \otimes f^*}  &H^0(A^{\prime}_S, \MM_S \otimes P_{\gamma}) \ar[u]^-{f^*} 
}}
\end{gathered}
\end{equation}
where the bottom arrow is defined  by taking the composition  
\begin{equation*}
\begin{gathered}
\resizebox{5.7 in}{!}{
\xymatrix{
\big(H^0(A^{\prime}, (\pi^{\prime})^*G^{\prime})\big)^{\otimes m} \otimes R\, 
\ar@{^{(}->}[r] \ar[rd] &\big(H^0(A^{\prime}, \mu_b^* (\pi^{\prime})^*G^{\prime})\big)^{\otimes m} \otimes R  \ar[d] \\
&H^0(A^{\prime}, \mu_b^* (\pi^{\prime})^*H^{\prime}) \otimes R \simeq H^0(A^{\prime}_S, (\mu_b^* (\pi^{\prime})^*H^{\prime})_S)
}}
\end{gathered}
\end{equation*}
 and then in turn composing  it with the usual multiplication map of global sections
\[
H^0(A^{\prime}_S, (\mu_b^* (\pi^{\prime})^*H^{\prime})_S) \otimes H^0 (A^{\prime}_S, ((\pi^{\prime})^*(H^{\prime})^{ab})_S \otimes P_{\gamma}) \to H^0(A^{\prime}_S, \MM_S \otimes P_{\gamma}).
\]
From the commutativity of the two  diagrams \eqref{comd0} and \eqref{comd2},  we get that $\bullet$ above  holds true if
 the bottom arrow in \eqref{comd2} is surjective.

To check this last fact, 
 consider the following commutative diagram 
\begin{equation}\label{comd1}
\begin{gathered}
\resizebox{5.7 in}{!}{
\xymatrix{
( \big(H^0(A^{\prime}, (\pi^{\prime})^*G^{\prime})\big)^{\otimes m} \otimes R) \otimes H^0 (A^{\prime}_S, ((\pi^{\prime})^*(H^{\prime})^{ab})_S \otimes P_{\gamma}) \ar[r] \ar[d]  &H^0(A^{\prime}_S, \MM_S \otimes P_{\gamma}) \ar[d] \\
 \big(H^0(A^{\prime}, (\pi^{\prime})^*G^{\prime})\big)^{\otimes m} \otimes H^0 (A^{\prime}, (\pi^{\prime})^*(H^{\prime})^{ab} \otimes P_{\bar{\gamma}}) \ar[r]
 &H^0(A^{\prime}, \MM \otimes P_{\bar{\gamma}})
}}
\end{gathered}
\end{equation}
where the vertical arrows are reductions modulo the maximal ideal $\m$ and  $$\bar{\gamma} \colon \mathrm{Spec}(R/\m) \to \mathrm{Spec}(R) \xrightarrow{\gamma} \widehat{A^{\prime}}.$$ 
The bottom arrow in \eqref{comd1} is one of the map considered in \eqref{fracprime} and, hence,
it is surjective. So, by Nakayama's lemma, the top arrow in \eqref{comd1} (which equals the bottom arrow in \eqref{comd2}) is  surjective, too.  
\end{proof}

\section*{Acknowledgment}
I thank Atsushi Ito, Zhi Jiang, Luigi Lombardi and Andr\'es Rojas 
for their useful comments.  
I am extremely grateful, in particular, to Ito for having  pointing out some  errors in a first version of this work.
Finally, I thank the referee for carefully reading the paper and for helpful suggestions.

\providecommand{\bysame}{\leavevmode\hbox
to3em{\hrulefill}\thinspace}


\begin{thebibliography}{EMS}


\bibitem[BL]{bala} P. Bangere and J. Lacini, {Syzygies of adjoint linear series on projective varieties}, arXiv:2302.02517 (2023).


\bibitem[Ca1]{catesi} F. Caucci, {The basepoint-freeness threshold, derived invariants of irregular varieties, and stability of syzygy bundles}, Ph.D. Thesis Sapienza Universit\`a di Roma, 2020. Available at 
 \href{https://iris.uniroma1.it/handle/11573/1355436}{https://iris.uniroma1.it/handle/11573/1355436}.

\bibitem[Ca2]{ca} F. Caucci, {The basepoint-freeness threshold and syzygies of abelian varieties}, Algebra Number Theory \textbf{14} (2020), no. 4, 947--960.





\bibitem[EL]{el} L. Ein and R. Lazarsfeld, {Syzygies and Koszul cohomology of smooth projective varieties of arbitrary dimension}, Invent. 
Math. \textbf{111} (1993), no. 1, 51--67.








\bibitem[Gr]{green} M. Green, {Koszul cohomology and the geometry of projective varieties}, J. Differ. Geom. \textbf{19} (1984), no. 1, 125--171.



\bibitem[GL]{grla} M. Green and R. Lazarsfeld, {On the projective normality of complete linear series on an algebraic curve}, Invent. Math., \textbf{83} (1986), no. 1, 73--90.








\bibitem[It]{ito} A. Ito, {$M$-regularity of $\Q$-twisted sheaves and its application to linear systems on abelian varieties}, Trans. Am. Math. Soc. \textbf{375}, No. 9, 6653--6673 (2022).

\bibitem[JT]{jeti} M. Jeon and S. Tirabassi, {Cohomological rank functions and syzygies of Kummer varieties}, arXiv:2303.16023 (2023). 




\bibitem[JP]{jipa} Z. Jiang and G. Pareschi, {Cohomological rank functions on abelian varieties}, 
Ann. Sci. \'Ec. Norm. Sup\'er. (4) \textbf{53} (2020), no. 4, 815--846.


\bibitem[Ji]{zhinew} Z. Jiang, {Syzygies of some rational homogeneous varieties}, arXiv:2108.02710 (2021). 

\bibitem[Kh1]{khaled}  A. Khaled, 
{\'Equations des vari\'et\'es de Kummer},
Math. Ann. \textbf{295} (1993), no. 4, 685--701.

\bibitem[Kh2]{khaled2} A. Khaled, 
{Projective normality and equations of Kummer varieties},
J. Reine Angew. Math. \textbf{465} (1995), 197--217.

\bibitem[Ke]{ke} G. Kempf, {Projective coordinate rings of abelian varieties}, in {\em Algebraic analysis, geometry and number theory}, Johns Hopkins Univ. Press, Baltimore, MD, 1989, 225--235.


\bibitem[Ko]{koizumi} S. Koizumi, {Theta relations and projective normality of Abelian varieties}, Amer. J. Math., \textbf{98} (1976), no. 4, 865--889.



\bibitem[KM]{komo} J. Koll\'{a}r and S. Mori, {\em Birational geometry of algebraic varieties}, Cambridge University Press, Cambridge, 1998.



\bibitem[La1]{lasampl} R. Lazarsfeld,  {A sampling of vector bundle techniques in the study of linear series}, in {\em Lectures on Riemann surfaces}, World Sci. Publ., Teaneck, NJ, 1989, 500--559.



\bibitem[La2]{laI} R. Lazarsfeld, {\em Positivity in algebraic geometry I}, Springer-Verlag, Berlin, 2004.










\bibitem[Mu]{mukai} S. Mukai, {Duality between $D(X)$ and $D(\widehat{X})$ with its application to Picard sheaves}, Nagoya Math. J. \textbf{81} (1981), 153--175.

\bibitem[M1]{mum1} D. Mumford, {On the equations defining abelian varieties I}, Invent. Math. \textbf{1} (1966), 287--354.


\bibitem[M2]{mum2} D. Mumford, {Varieties defined by quadratic equations}, in {\em Questions on Algebraic Varieties (C.I.M.E., III Ciclo, Varenna, 1969)}, Edizioni Cremonese, Rome, 1970, 29--100. 

\bibitem[M3]{mum3} D. Mumford, {\em Abelian varieties}, Second edition, Oxford University Press, Oxford, 1974.

\bibitem[Pa]{pa1} G. Pareschi,  {Syzygies of abelian varieties}, J. Amer. Math. Soc. \textbf{13} (2000), no. 3, 651--664.






\bibitem[PP]{papoIII} G. Pareschi and M. Popa, {Regularity on abelian varieties III: relationship with generic vanishing and applications}, in {\em Grassmannians, moduli spaces and vector bundles}, Clay Math. Proc., \textbf{14}, Amer. Math. Soc., Providence, RI, 2011, 141--167.


\bibitem[Ro]{rojas} A. Rojas, 
{The basepoint-freeness threshold of a very general abelian surface},
Selecta Math. (N.S.) \textbf{28} (2022), no. 2, Paper No. 34, 14 pp.


 
\bibitem[Sa1]{sa1} R. Sasaki,  {Bounds on the degree of the equations defining Kummer varieties},
J. Math. Soc. Japan \textbf{33} (1981), no. 2, 323--333.

\bibitem[Sa2]{sa2} R. Sasaki, 
{On the equations defining Kummer varieties},
J. Math. Soc. Japan \textbf{34} (1982), no. 2, 223--239.


\bibitem[Se1]{se3} T. Sekiguchi,  {On projective normality of Abelian varieties II}, J. Math. Soc. Japan \textbf{29} (1977), no. 4, 709--727.


\bibitem[Se2]{se1} T. Sekiguchi, 
{On the cubics defining abelian varieties},
J. Math. Soc. Japan \textbf{30} (1978), no. 4, 703--721.


\bibitem[Se3]{se2} T. Sekiguchi, 
{On the normal generation by a line bundle on an Abelian variety},
Proc. Japan Acad. Ser. A Math. Sci. \textbf{54} (1978), no. 7, 185--188.






\bibitem[Ti1]{tithesis} S. Tirabassi, {Syzygies, Pluricanonical Maps, and the Birational Geometry of Varieties of Maximal Albanese Dimension}, Ph.D. Thesis,  Universit\`a degli Studi ``Roma Tre'', 2011, arXiv:1210.0324. 

\bibitem[Ti2]{ti} S. Tirabassi, 
{Syzygies and equations of Kummer varieties}, Bull. Lond. Math. Soc. \textbf{45} (2013), no. 3, 651--665.



\end{thebibliography}
\end{document}